\newtheorem{theorem}{Theorem}[section]
\newtheorem{lemma}[theorem]{Lemma}
\theoremstyle{definition}
\newtheorem{definition}[theorem]{Definition}
\newtheorem{remark}[theorem]{Remark}
\let\csname equation*\endcsname\relax
\let\csname endequation*\endcsname\relax
\def\newDelta{\mathcal{Z}}
\newcommand{\blue}[1]{\ifmmode{\textcolor{blue}{#1}}\else {\textcolor{blue}{#1}}\fi}
\newcommand{\red}[1]{\ifmmode\mathbf{\textcolor{red}{#1}}\else \textbf{\textcolor{red}{#1}}\fi}
\title[Weak solutions of a control-volume fibre coating model]
{On weak solutions of a control-volume model for liquid films flowing down a fibre} 
\author[Roman M. Taranets, Hangjie Ji and Marina Chugunova]{}
\subjclass{Primary: 35K35; Secondary: 35K55, 35K65, 76A20, 35Q35}
\keywords{Thin films; travelling waves; fourth-order parabolic partial differential equations; lubrication theory; weak solution}
\thanks{Roman Taranets is partially supported by the European Federation of Academies of Sciences and Humanities (ALLEA) through grant EFDS-FL2-08.
Hangjie Ji is supported by NSF DMS 2309774 and NC State University Faculty Research and Professional Development (FRPD) Grant.}
\thanks{$^*$Corresponding author: Hangjie Ji}
\begin{document}

\maketitle
\centerline{\scshape
Roman M. Taranets$^{{\href{mailto:taranets\_r@yahoo.com}{\textrm{\Letter}}}1}$,
Hangjie Ji$^{{\href{mailto:hangjie\_ji@ncsu.edu}{\textrm{\Letter}}}*2}$
and Marina Chugunova$^{{\href{mailto:marina.chugunova@cgu.edu}{\textrm{\Letter}}}3}$
}

\medskip

{\footnotesize
 \centerline{$^1$Institute of Applied Mathematics and Mechanics of the NASU, G.Batyuka Str. 19, 84100, Sloviansk, Ukraine}
} 

\medskip

{\footnotesize
 \centerline{$^2$Department of Mathematics, North Carolina State University, 2311 Stinson Dr, Raleigh, NC 27695, USA}
}

\medskip
{\footnotesize
 \centerline{$^3$Institute of Mathematical Sciences, Claremont Graduate University, 150 E. 10th Str., Claremont, CA 91711, USA}
}

\bigskip



\begin{abstract}
This paper presents an analytical investigation of the solutions to a control volume model for liquid films flowing down a vertical fibre.
The evolution of the free surface is governed by a coupled system of degenerate nonlinear partial differential equations, which describe the fluid film's radius and axial velocity. We demonstrate the existence of weak solutions to this coupled system by applying a priori estimates derived from energy-entropy functionals. 
Additionally, we establish the existence of traveling wave solutions for the system. 
To illustrate our analytical findings, we present numerical studies that showcase the dynamic solutions of the partial differential equations as well as the traveling wave solutions.
\end{abstract}

\maketitle

\section{Introduction}

Thin liquid films flowing down a vertical fibre have attracted many interests in the past decades due to their importance in a variety of engineering applications, including heat and mass exchangers, thermal desalination, and vapor and CO$_2$ capturing \cite{sadeghpour2019water,zeng2019highly,zeng2018thermohydraulic,zeng2017experimental,sadeghpour2021experimental}.
These liquid films are fundamentally driven by Rayleigh-Plateau instability and gravity modulation, spontaneously exhibiting complex interfacial instability and pattern formation \cite{Qu_r__1990,quere1999fluid,sedighi2021capillary,ruyer2012wavy}.

Previous experimental works have found that the downstream flow dynamics and pattern formation highly depend on the flow rate, fibre radius, liquid properties, and inlet geometries. Specifically, three typical flow regimes have been extensively studied \cite{kliakhandler2001viscous,craster2006viscous,ruyer2008modelling,frenkel1992nonlinear,yu2013velocity}. At high flow rates, the convective instability dominates the system and irregular droplet coalescence occur frequently. For lower flow rates, the Rayleigh-Plateau regime emerges where stable travelling droplets move at a constant speed. If flow rates are further reduced, the isolated droplet regime occurs where widely-spaced droplets coexist with small amplitude wave patterns. Similar regime transitions can also be triggered by varying the nozzle diameters or imposing a gradient to the liquid property along the fibre \cite{Ji2020Modeling,ji2020thermal,ding2017three}.
A good understanding of these dynamics is critical to the design and control of engineering systems that involve a stable train of travelling droplets.

In the low Reynolds number limit, classical lubrication models have been developed for the dynamics of falling viscous liquid films along an axisymmetric cylindrical fibre. Under the long-wave approximation, the resultant evolution equations are a family of fourth-order degenerate parabolic PDEs for the fluid film thickness \cite{frenkel1992nonlinear,chang1999mechanism,kalliadasis1994drop,ji2019dynamics,marzuola2019}. These models incorporate the effects of gravity and surface tension, where the surface tension plays both stabilizing and destabilizing roles depending on the axial and azimuthal curvature of the free surface.
Numerical and analytical investigations of these models have also revealed the dependence of the droplet dynamics on the substrate effects and external physical fields \cite{ji2020travelling,ji2020thermal,quere1989making}.

For higher flow rates and for fluid films near the nozzle where inertial effects are significant,
systems of coupled equations for both the film thickness and the local flow rate have also been investigated \cite{trifonov1992steady,ruyer2009film,ruyer2008modelling,Ji2020Modeling}. These models include inertia effects based on a weighted-residual integral boundary layer approach by assuming a local velocity profile.
More recently, Ruan et al. \cite{ruan2021liquid} proposed a new framework for liquid films flowing down a fibre using a control-volume approach. Their model expresses the conservation of mass and axial momentum
via a coupled system for the fluid film radius $h(x,t)$ and the mean axial velocity $u(x,t)$,
where the momentum equation is
\begin{equation}\label{momentum}
u_t + a\left( \frac{u^2}{2}\right)_x  + b\, \kappa_x = c\,\frac{[(h^2-1)u_x]_x}{h^2-1} + 1 - \frac{u}{g(h)}
\end{equation}
and the mass conservation equation is
\begin{equation}\label{mass_h}
2 h h_t + a[ u(h^2-1)]_x   = 0 ,
\end{equation}
where the dimensionless parameter $a$ represents the square of the Froude number,
$b$ is the reciprocal of the Bond number,
$c$ represents the ratio of axial viscous to gravitational forces, and $g(h)$ represents the axial velocity profile.
 The film thickness is given by $h(x,t)-1$, and $\kappa$ represents the combined azimuthal and streamwise curvatures of the free surface,
 \begin{equation}\label{curvature}
\kappa = \frac{1}{h(1+h_x^2)^{1/2}}-\left[\frac{h_x}{(1+h_x^2)^{1/2}}\right]_x.
 \end{equation}
 Furthermore,  by taking different forms of $g(h)$, the model \eqref{momentum}--\eqref{mass_h} corresponds to different flow regimes.
 For the high Reynolds number regime, we have the plug flow model with the axial velocity profile
 \begin{equation}\label{plug_model}
     g(h) = h^2-1.
 \end{equation}
This model assumes a uniform velocity in the cross section with a viscous drag force on the fluid. In contrast, for the low Reynolds number case, we assume a fully-developed laminar velocity profile, with 
\begin{equation}\label{laminar_model}
    g(h) = \frac{I(h)}{h^2-1},
\end{equation}
where $I(h)= \tfrac{1}{16}[4 h^4 \ln (h) + (h^2-1)(1-3h^2) ]$.

The control-volume approach has been extensively applied in fluid dynamics problems to analyze mass and momentum balances \cite{sonntag2008fundamentals}. In the context of film flow and one-dimensional hydraulic jumps, Singha at al. \cite{singha2005hydraulic} applied control volume analysis to establish relationships between inlet and outlet variables. Furthermore, the adoption of depth-averaging to replace the velocity component in the flow direction with its average, such as in the work of Bohr et al. \cite{bohr1993shallow}, has a long history in the study of such film flows. The approach of Ruan et al. \cite{ruan2021liquid} combines control volume analysis and depth-averaging for a heuristic derivation of the PDEs describing the liquid film.

\begin{figure}
    \centering 
    \includegraphics[scale = 0.3]{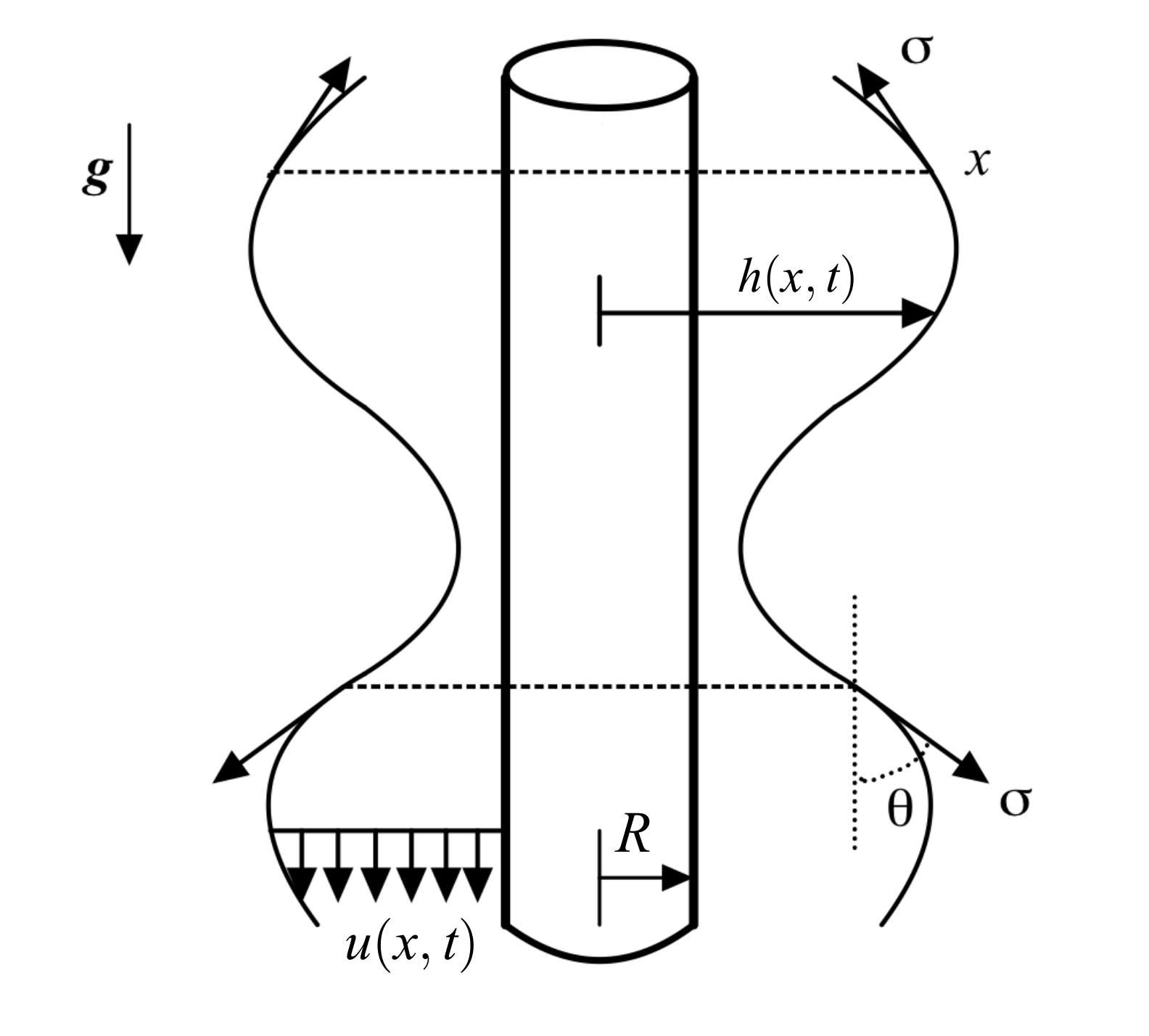} 
    \caption{Schematic plot of a liquid film flowing down a cylindrical fibre. The axial coordinate along the fibre axis is $x$, and the radial distance from that axis is $h(x,t)$. The dimensionless fibre radius $R = 1$.
    }
\label{fig:schematic} 
\end{figure} 

While extensive modelling works have been carried out for falling liquid films, relatively less research \cite{ji2020travelling} have focused on establishing analytical understanding of the developed models.
In this work, we will analytically investigate the coupled equations \eqref{momentum}--\eqref{mass_h}. Energy and entropy estimates will be constructed to establish the existence of weak solutions to the problem. Similar analytical techniques were also applied to other models (for example, see \cite{ji2020travelling,bresch2007existence,bresch2015two,bresch2003existence}).
Theory for the uniqueness of solutions to the system \eqref{momentum}--\eqref{mass_h} is still lacking due to its nonlinear and degenerate nature. However, there have been results regarding solution uniqueness for similar systems involving compressible fluids (see, e.g. \cite{solonnikov1976solvability, desjardins1999survey}).
We will also show the existence of travelling wave solutions to the problem.

The rest of the paper is structured as follows. In section \ref{sec:problem}, we formulate the problem statement. In section \ref{sec:existence}, we show the existence of weak solutions to the problem via energy and entropy estimates. Section \ref{sec:travelling} presents a detailed discussion on travelling wave solutions. Numerical studies are presented in section \ref{sec:numerics} for both the plug flow and the laminar flow cases, followed by concluding remarks in section \ref{sec:conclusion}.

\section{Problem statement}
\label{sec:problem}
We study the following initial boundary value problem:
\begin{equation}\label{r-1}
u_t + a\left( \frac{u^2}{2}\right)_x  + b\, \kappa_x = c\,\frac{[(h^2-1)u_x]_x}{h^2-1} + 1 - \frac{u}{g(h)} \text{ in } Q_T,
\end{equation}
\begin{equation}\label{r-2}
2 h h_t + a[ u(h^2-1) ]_x   = 0 \text{ in } Q_T,
\end{equation}
\begin{equation}\label{r-3}
u \text{ and } h \text{ are }  |\Omega|-\text{periodic},
\end{equation}
\begin{equation}\label{r-4}
u(x,0) = u_0(x), \  h(x,0) = h_0(x),
\end{equation}
where $\Omega \subset \mathbb{R}^1 $ is an open interval, $ Q_T:=\Omega\times (0,T)$, $a,\,b,\,c$ are non-negative constants, and
$$
\kappa = f(h_x) h^{-1} - f^3(h_x)h_{xx}, \  \ f(z) = (1 +z^2)^{-\frac{1}{2}},
$$
$$
\Phi(z) = \frac{1}{f(z)}, \ \Phi'(z) =  z f(z), \  \Phi''(z) =   f^3(z),
$$
$$
g(h)= h^2 -1 \text{ or } g(h)= \frac{I(h)}{h^2 -1},
$$
where
$$
I(h):= \tfrac{1}{16}[4 h^4 \ln (h) + (h^2-1)(1-3h^2) ].
$$
Let
$$
v = h^2 -1.
$$
Then we can rewrite (\ref{r-1}) and (\ref{r-2}) in the following form:
\begin{equation}\label{r-1-0}
u_t + a\left( \frac{u^2}{2}\right)_x  + b\, \kappa_x = c\,\frac{(v \,u_x)_x}{v} + 1 - \frac{u}{g(h)} \quad \text{ in } Q_T,
\end{equation}
\begin{equation}\label{r-2-0}
v_t + a( u \, v )_x   = 0 \quad \text{ in } Q_T.
\end{equation}
Integrating (\ref{r-2-0}) in $Q_t$, we find that $v(x,t)$ satisfies
\begin{equation}\label{mass}
  \int \limits_{\Omega} {v(x,t)\,dx} = \int \limits_{\Omega} {v_0(x)\,dx} := M > 0 \ \ \forall\, t \geqslant 0.
\end{equation}
Let us denote by
\begin{equation}\label{Gdefinition}
G(h) = \int \limits_A^h { y \int \limits_A^y { \frac{s\,ds}{I(s)}  dy  } } \geqslant 0 \text{ for some } A > 1.
\end{equation}
Furthermore, we assume that the initial data $(v_0,u_0)$  satisfy
\begin{equation}\label{rr-incond}
\begin{aligned}
& h_0 \geqslant 1, \text{ i.\,e. }  v_0 := h_0^2 -1 \geqslant 0,  \text{ for all } x \in \bar{\Omega};\\
& \sqrt{v_0} \in H^1(\Omega) ; \ h_0 \Phi(h_{0,x}), \    v_0 u^2_0 \in L^1(\Omega) ;\\
 - & \log (v_0)  \in L^1(\Omega)  \text{ for } g(h) =v, \  G(h_0)  \in L^1(\Omega) \text{ for } g(h) = \tfrac{I(h)}{v} .
\end{aligned}
\end{equation}

\begin{remark}
\label{remark_G}
We note that $G(h)$ defined in \eqref{Gdefinition} has the following asymptotic behavior,  
$$ G(h) \sim \frac{C}{h -1} ~~\text{ as } h \to 1 \text{ for some positive constant }
C .
$$
\noindent Therefore,  $\sqrt{v_0} \in H^1(\Omega)$ and
$G(h_0)  \in L^1(\Omega)$ are satisfied provided $v_0 > 0$ only.
\end{remark}

\begin{definition}\label{Def-weak}
A pair $(h,u)$ is a weak solution to (\ref{r-1-0})--(\ref{r-2-0}) with periodic boundary conditions
and initial conditions $(h_0, u_0)$ if $1 \leqslant h \in C(\bar{Q}_T)$, $v = h^2 - 1$, and $u$ satisfy the regularity properties
\begin{equation}\label{rr-apr}
 \sqrt{v} \in L^{\infty}(0,T; H^1(\Omega)); \ - \log(v) (\text{or } G(h)), \ v u^2 \in L^{\infty}(0,T; L^1(\Omega));
\end{equation}
\begin{equation}\label{rr-apr-2}
h \Phi(h_x) \in   L^{\infty}(0,T; L^1(\Omega));\   h^{-1} f(h_x) \in  L^{1}(Q_T);
\end{equation}
\begin{equation}\label{rr-apr-3}
\chi_{\{|h_x| < \infty \}} \sqrt{h f^3(h_x)} h_{xx}; \ \chi_{\{v > 0\}} \sqrt{v} u_x, \ \sqrt{\tfrac{v}{g(h)}} u \in L^{2}(Q_T),
\end{equation}
and the following holds
$$
\iint \limits_{Q_T} { v  \phi_t \,dx dt} + \int \limits_\Omega {v_0 \phi (x,0)\, dx}
+ {a}\iint \limits_{Q_T} { u v \phi_x  \,dx dt }  = 0,
$$
\begin{align*}
\iint \limits_{Q_T} {u v  \psi_t \,dx dt} &+ \int \limits_\Omega {u_0 v_0 \psi (x,0)\, dx} + \frac{a}{2} \iint \limits_{Q_T} {\chi_{\{v > 0\}}  v u^2  \psi_x \,dx dt} \\
&+b \iint \limits_{Q_T} {( 2 \Phi' (h_{x}) -  \chi_{\{  |h_x| < \infty \}} f^3(h_{x})v_x h_{xx} ) \psi   \,dx dt}  \\
&+b \iint \limits_{Q_T} {( h^{-1}  f(h_{x}) -  \chi_{\{  |h_x| < \infty \}} f^3(h_{x}) h_{xx} ) v \psi_x  \,dx dt}   \\
 &-c \iint \limits_{Q_T} {\chi_{\{v > 0\}} v u_x  \psi_x \,dx dt} +
\iint \limits_{Q_T} { \bigl( v - \tfrac{uv}{g(h)} \bigr)   \psi  \,dx dt} =0
\end{align*}
for all $\phi \in C_c^{\infty}(\bar{Q}_T)$ and $\psi \in  C_c^{\infty}(\bar{Q}_T)$
such that $\phi(x,T) =  \psi(x,T) = 0$.
\end{definition}

We note that the sets $v=0$ and $h_x=\infty$ coincide with the sets $h=1$ and $v_x=\infty$.
Based on Definition \ref{Def-weak}, we will establish the existence of weak solutions to the problem and prove the following theorem:
\begin{theorem}
Let the initial data $(h_0,u_0)$  satisfy \eqref{mass}--\eqref{rr-incond} and $T > 0$. Then there exists
a weak solution {$(h,u)$} in the sense of Definition~\ref{Def-weak}, where {$v=h^2-1$}. Moreover,
the sets $\{ v(.,t) = 0\}$ and $\{ |h_x(.,t)| = \infty\}$ have Lebesgue measure zero for any $t \in [0,T]$
for the plug flow model with $g(h) = v$. In the case of the laminar flow model with $g(h) = \frac{I(h)}{v}$,
we have $v(x,t) > 0$ in $Q_T$.
\end{theorem}

\section{Existence of weak solutions}
\label{sec:existence}

In this section, we will introduce the energy and entropy functionals for the problem and show their estimates in subsections \ref{sec:energy} and \ref{sec:entropy}. The  proof of key results in Lemma \ref{lem-en} and Lemma \ref{lem-entr} follows the work of Kitavtsev et al. \cite{kitavtsev2011weak}.

\subsection{Energy estimate}\label{sec:energy}

Let us denote the energy functional by
$$
\mathcal{E}(t) := \tfrac{1}{2}  \int \limits_{\Omega} {( v u^2 + \tfrac{4 b}{a} h  \Phi(h_x))\,dx} .
$$

\begin{lemma}[\textbf{Energy inequality}]\label{lem-en}
Let 
{$(h,u)$} be {a} solution to the system (\ref{r-1-0})--(\ref{r-2-0}) with
periodic boundary conditions, {where $ 1 < h \in  L^{\infty}(0,T;H^1(\Omega)), u \in L^{\infty} (0,T; L^2(\Omega)) \cap L^{2} (0,T; H^1(\Omega))$}
then $(h,u)$ {satisfies} the following inequality
\begin{equation}\label{e-3}
  \mathcal{E}(T) +
c \iint \limits_{Q_T} {  v u^2_x \,dx dt} + \iint \limits_{Q_T} { \tfrac{u^2 v }{g(h)} \,dx dt} \leqslant
C_0(T) ,
\end{equation}
where {$v = h^2 - 1$},  $C_0(T) = ( \mathcal{E}^{\frac{1}{2}}(0) + \tfrac{\sqrt{2M}}{2}T )^2 $.
\end{lemma}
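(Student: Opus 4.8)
The plan is to derive the energy balance by testing the momentum equation \eqref{r-1-0} with $v u$ and the mass equation \eqref{r-2-0} with suitable multipliers, then combine the two. First I would multiply \eqref{r-1-0} by $vu$ and integrate over $\Omega$; the time-derivative term $u_t\, vu$ combines with the contribution of $v_t$ from \eqref{r-2-0} (tested against $\tfrac12 u^2$) to produce $\tfrac{d}{dt}\tfrac12\int_\Omega v u^2\,dx$, using the identity $\partial_t(\tfrac12 v u^2) = v u\, u_t + \tfrac12 u^2 v_t$. The convective term $a(\tfrac{u^2}{2})_x\, vu$ and the flux term coming from $a(uv)_x$ against $\tfrac12 u^2$ should cancel after an integration by parts (exploiting periodicity, so no boundary terms appear); this is the standard cancellation that makes the hyperbolic part energy-neutral.

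Next I would handle the capillary term: $b\,\kappa_x\, vu$. Using \eqref{r-2-0} in the form $vu = -\tfrac{1}{a}(\text{something})$ is awkward, so instead I would integrate by parts to move the $x$-derivative off $\kappa$, write $b\,\kappa_x\, vu = -b\,\kappa\,(vu)_x + (\text{boundary}) = b\,\kappa\, v_t / a$ by \eqref{r-2-0}, i.e. $a(uv)_x = -v_t$. Then $\tfrac{b}{a}\int_\Omega \kappa\, v_t\,dx$ must be recognized as $\tfrac{d}{dt}$ of the interfacial energy $\int_\Omega \tfrac{2b}{a} h\,\Phi(h_x)\,dx$ up to the factor in $\mathcal E$; this uses $v = h^2-1$, so $v_t = 2h h_t$, together with the variational identity relating $\kappa$ to the first variation of $\int h\,\Phi(h_x)\,dx$ — concretely $\kappa = \Phi(h_x) - (h\,\Phi'(h_x))_x$ after substituting $\Phi'(z)=zf(z)$ and $\Phi''(z)=f^3(z)$, matching the given formula for $\kappa$. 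Verifying $\tfrac{d}{dt}\int_\Omega h\,\Phi(h_x)\,dx = \int_\Omega \kappa\, h_t\,dx$ by a direct integration by parts is the technical heart of this step.

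For the right-hand side, multiplying $c\,(vu_x)_x/v$ by $vu$ and integrating by parts gives $-c\iint v u_x^2$, and the term $-u/g(h)$ against $vu$ gives $-\iint u^2 v/g(h)$; both move to the left-hand side with the correct signs, using $g(h)>0$ for $h>1$ (which holds since $v_0\geq 0$ is propagated, a fact I would take from the existence theorem). The only genuinely sign-indefinite term is the forcing $1\cdot vu$, giving $\int_\Omega vu\,dx$; by Cauchy–Schwarz and the mass conservation \eqref{mass}, $\bigl|\int_\Omega vu\,dx\bigr| \leq \bigl(\int_\Omega v\,dx\bigr)^{1/2}\bigl(\int_\Omega v u^2\,dx\bigr)^{1/2} = \sqrt M \cdot (2\mathcal E_{\mathrm{kin}})^{1/2} \leq \sqrt{2M}\,\mathcal E^{1/2}(t)$. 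Assembling, one gets $\tfrac{d}{dt}\mathcal E(t) + c\int v u_x^2 + \int u^2 v/g(h) \leq \sqrt{2M}\,\mathcal E^{1/2}(t)$, and dropping the two nonnegative dissipation terms momentarily yields $\tfrac{d}{dt}\mathcal E \leq \sqrt{2M}\,\mathcal E^{1/2}$, hence $\tfrac{d}{dt}\mathcal E^{1/2} \leq \tfrac{\sqrt{2M}}{2}$, so $\mathcal E^{1/2}(t)\leq \mathcal E^{1/2}(0) + \tfrac{\sqrt{2M}}{2}t$. Integrating the full inequality in $t$ over $(0,T)$ and using this bound on $\mathcal E^{1/2}$ in the forcing term produces \eqref{e-3} with $C_0(T) = (\mathcal E^{1/2}(0) + \tfrac{\sqrt{2M}}{2}T)^2$.

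I expect the main obstacle to be the capillary-term bookkeeping: correctly identifying $\tfrac{b}{a}\int_\Omega \kappa\, v_t\,dx$ with the time derivative of the interfacial energy, keeping track of the chain rule through $v=h^2-1$ and the precise normalization (the factor $4b/a$ in $\mathcal E$ versus $2b/a$ appearing after using $v_t = 2hh_t$), and ensuring all integration-by-parts boundary terms vanish by periodicity. The rest — the hyperbolic cancellation, the dissipation signs, and the Gronwall-type closing via $\mathcal E^{1/2}$ — is routine once the capillary identity is in place.
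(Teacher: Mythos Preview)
Your proposal is correct and matches the paper's proof step for step: test \eqref{r-1-0} with $vu$, use \eqref{r-2-0} to convert the kinetic and capillary contributions into $\tfrac{d}{dt}\mathcal{E}$, integrate the viscous term by parts to produce the dissipation, bound the forcing $\int_\Omega vu\,dx$ via Cauchy--Schwarz and mass conservation, then close with the differential inequality for $\mathcal{E}^{1/2}$. One minor slip in your capillary bookkeeping: $\Phi(h_x)-(h\Phi'(h_x))_x$ actually equals $h\kappa$ rather than $\kappa$, so the identity you need is $\tfrac{d}{dt}\int_\Omega h\Phi(h_x)\,dx=\int_\Omega h\kappa\,h_t\,dx$, and the extra factor of $h$ is supplied precisely by $v_t=2hh_t$, which is exactly how the paper's computation proceeds.
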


\begin{proof}[Proof of Lemma~\ref{lem-en}]
Multiplying (\ref{r-1-0}) by $u v$ and integrating over $\Omega$, we have
\begin{multline}\label{e-1}
 \int \limits_{\Omega} {u v u_t\,dx} + a \int \limits_{\Omega} {u v ( \tfrac{u^2}{2})_x \,dx} +
b \int \limits_{\Omega} {u v \kappa_x \,dx} =  
c \int \limits_{\Omega} {u   ( v u_x)_x \,dx} +
\int \limits_{\Omega} {u v ( 1 - \tfrac{u }{g(h)})  \,dx}.
\end{multline}

Since the first two integrals on the left-hand-side of \eqref{e-1} satisfy
\begin{align*}
\int \limits_{\Omega} {u v u_t\,dx} + a \int \limits_{\Omega} {u v ( \tfrac{u^2}{2})_x \,dx} &=
\int \limits_{\Omega} {  v (\tfrac{u^2}{2}) _t\,dx} - a \int \limits_{\Omega} {(u v)_x   \tfrac{u^2}{2}  \,dx}  \\
 &=\int \limits_{\Omega} {  v (\tfrac{u^2}{2}) _t\,dx} + \int \limits_{\Omega} { v_t  \tfrac{u^2}{2}  \,dx} =
 \tfrac{1}{2} \tfrac{d}{dt } \int \limits_{\Omega} {  v  u^2 \,dx} ,
\end{align*}
and the third integral on the left-hand-side of \eqref{e-1} satisfies
\begin{align*}
b \int \limits_{\Omega} {u v \kappa_x \,dx} &= - b \int \limits_{\Omega} {(u v)_x \kappa  \,dx} =
\tfrac{b}{a} \int \limits_{\Omega} {v_t \kappa  \,dx} \\
 &=\tfrac{2 b}{a} \int \limits_{\Omega} { h h_t (f(h_x) h^{-1} - f^3(h_x)h_{xx} )  \,dx}=
 \tfrac{2 b}{a} \int \limits_{\Omega} { (  h_t  f(h_x)   - h h_t \Phi''(h_x)h_{xx} )  \,dx} \\
 &= \tfrac{2 b}{a} \int \limits_{\Omega} { (  h_t  f(h_x)   -  h h_t (\Phi'(h_x))_{x} )  \,dx} =
\tfrac{2 b}{a} \int \limits_{\Omega} { (  h_t  f(h_x)   + ( h h_t)_x  \Phi'(h_x)  )  \,dx}   \\
&=\tfrac{2 b}{a} \int \limits_{\Omega} { (  h_t  f(h_x)   +   h_t h_x  \Phi'(h_x) + h h_{xt}\Phi'(h_x)  )  \,dx} \\
&=\tfrac{2 b}{a} \int \limits_{\Omega} { (  h_t  f(h_x)   +   h_t h^2_x  f(h_x) + h  (\Phi(h_x))_t  )  \,dx} \\
&=\tfrac{2 b}{a} \int \limits_{\Omega} { (  h_t  \Phi(h_x)   + h  (\Phi(h_x))_t  )  \,dx}  =
\tfrac{2 b}{a} \tfrac{d}{dt } \int \limits_{\Omega} {  h  \Phi(h_x) \,dx},
\end{align*}
then from (\ref{e-1}) it follows that
\begin{equation}\label{e-2}
  \tfrac{d}{dt } \mathcal{E}(t) +
c \int \limits_{\Omega} {  v u^2_x \,dx} + \int \limits_{\Omega} { \tfrac{u^2 v }{g(h)} \,dx} =
\int \limits_{\Omega} {u v    \,dx}.
\end{equation}
Taking into account the conservation of mass \eqref{mass} and
\begin{equation}\label{est-1}
\int \limits_{\Omega} {u v \,dx} \leqslant M^{\frac{1}{2}} \Bigl( \int \limits_{\Omega} {  v u^2 \,dx} \Bigr)^{\frac{1}{2}},
\end{equation}
after integrating \eqref{e-2} in time, we obtain the energy estimate (\ref{e-3}).
\end{proof}

\subsection{Entropy estimate}\label{sec:entropy}

We define two entropy-like functionals for the plug flow and laminar flow models separately.
For the plug flow model with $g(h) = v$,
we denote the entropy functional by
$$
\mathcal{S}_1(u,h) := \tfrac{1}{2} \int \limits_{\Omega} { \bigl[   v (u + \tfrac{c}{a} \tfrac{v_x}{v} )^2 + \tfrac{4b}{a}h  \Phi(h_x) +
\tfrac{2c}{a^2} (v - \log (v) ) \bigr]\,dx}.
$$
For the laminar flow model with $g(h) = \frac{I(h)}{v}$, we define the entropy functional as
$$
\mathcal{S}_2(u,h) := \tfrac{1}{2} \int \limits_{\Omega}  \bigl[   v (u + \tfrac{c}{a} \tfrac{v_x}{v} )^2
  + \tfrac{4b}{a} h  \Phi(h_x) +
\tfrac{8c}{a^2} G(h) \bigr]\,dx,
$$
where $G(h)$ is defined in \eqref{Gdefinition}.
Next, we will prove the following entropy estimates for $\mathcal{S}_1(u,h)$ and $\mathcal{S}_2(u,h)$.
\begin{lemma}[\textbf{Entropy inequality}]\label{lem-entr}
Let $(h,u)$ be a solution to the system (\ref{r-1-0})--(\ref{r-2-0}) with
periodic boundary conditions, where $v=h^2-1$, $0 < v \in  L^{\infty} (0,T; H^1(\Omega)) \cap L^{2} (0,T; H^2(\Omega)), u \in L^{\infty} (0,T; L^2(\Omega)) \cap L^{2} (0,T; H^1(\Omega)),$
then $(h,u)$ satisfies the following inequality
\begin{align}
\label{n-6}
\mathcal{S}_i(u,h) +
\tfrac{2bc}{a} \iint \limits_{Q_T} { h f^3(h_x)h^2_{xx} \,dx dt}  +
\iint \limits_{Q_T} {  \tfrac{u^2 v }{g(h)}  \,dx dt}  \\
+\tfrac{2bc}{a} \iint \limits_{Q_T} { h^{-1} f(h_x) \,dx dt} \leqslant
 C_i(T), \ i =1,2, 
 \nonumber
\end{align}
where
$$
C_i(T) := \mathcal{S}_i(u_0,h_0) + \int \limits_{0}^T { c\,C_0(t) + (2M)^{\frac{1}{2}} C_0^{\frac{1}{2}}(t)  \, dt  }.
$$
\end{lemma}

\begin{remark}
Using the estimate $(a+b)^2 \geqslant \epsilon a^2 - \frac{\epsilon}{1-\epsilon} b^2$ for any $\epsilon \in [0,1)$, 
from (\ref{n-6}) and (\ref{e-3}), we deduce that for the plug flow model,
\begin{equation}\label{n-7}
\int \limits_{\Omega} {  \tfrac{v_x^2}{v} \,dx} \leqslant ( \tfrac{a}{c})^2 \bigl[ \tfrac{1}{1-\epsilon}
\int \limits_{\Omega} { v \,u^2 \,dx} + \tfrac{2}{\epsilon}  C_i(T)  \bigr] \leqslant C_3(T) \quad \text{for any } \epsilon \in (0,1),\, i=1,2,
\end{equation}
where
\begin{equation} \label{eq:C3}
C_3(T) := ( \tfrac{a}{c})^2 \bigl[ \tfrac{2}{1-\epsilon}
C_0(T) + \tfrac{2}{\epsilon}  C_i(T)  \bigr].
\end{equation}
From (\ref{n-7}), it follows that $\bigl( v^{\frac{1}{2}}\bigr)_x \in L^{\infty}(0,T; L^2(\Omega))$.
\end{remark}

\begin{proof}[Proof of Lemma~\ref{lem-entr}]
Multiplying (\ref{r-1-0}) by $  v_x $ and integrating over $\Omega$, we have
\begin{equation}\label{n-1}
 \int \limits_{\Omega} {( u_t + a u u_x)v_x \,dx}   +
b \int \limits_{\Omega} {  v_x \kappa_x \,dx} =  
- c \int \limits_{\Omega} { v u_x( \tfrac{v_x}{v})_x \,dx} -
\int \limits_{\Omega} {u   \tfrac{v_x }{g(h)}   \,dx}.
\end{equation}
For the first integral on the left-hand-side of \eqref{n-1}, we have
\begin{align*}
 \int \limits_{\Omega} {( u_t + a u u_x)v_x \,dx} &= \int \limits_{\Omega} {( u_t v_x + a (u v)_x u_x - a\, v u_x^2)\,dx}  \\
&=\int \limits_{\Omega} {( u_t v_x - v_t u_x - a\, v u_x^2)\,dx} = \int \limits_{\Omega} {( u_t v_x + u v_{xt} - a\, v u_x^2)\,dx}\\
&=\tfrac{d}{dt} \int \limits_{\Omega} { u  v_x  \,dx} - a \int \limits_{\Omega} { v u_x^2 \,dx}.
\end{align*}
To handle the first integral on the right-hand-side of \eqref{n-1}, we deduce
\begin{align*}
\tfrac{d}{dt} \int \limits_{\Omega} {\tfrac{v^2_x}{v}  \,dx}   &= 2 \int \limits_{\Omega} {\tfrac{v_x v_{xt}}{v}  \,dx} -
\int \limits_{\Omega} {\tfrac{v_x^2 v_{t}}{v^2}  \,dx} =
- 2 \int \limits_{\Omega} { (\tfrac{v_x  }{v})_x v_t  \,dx} - \int \limits_{\Omega} {\tfrac{v_x^2}{v^2}v_{t}  \,dx} \\
& 
 = 2a \int \limits_{\Omega} { (\tfrac{v_x  }{v})_x (u v)_x dx} +
a\int \limits_{\Omega} {\tfrac{v_x^2}{v^2} (uv)_x \,dx} \\
&=2a \int \limits_{\Omega} { (\tfrac{v_x  }{v})_x (u v)_x dx} - 2a \int \limits_{\Omega} { (\tfrac{v_x  }{v})_x  u v _x dx}=
2a \int \limits_{\Omega} { v u_x (\tfrac{v_x  }{v})_x  dx},
\end{align*}
where the second last step is obtained using integration by parts.

For the second integral on the left-hand-side of \eqref{n-1}, we have
\begin{align*}
b \int \limits_{\Omega} {  v_x \kappa_x \,dx} &= - b \int \limits_{\Omega} {  v_{xx} (f(h_x) h^{-1} - f^3(h_x)h_{xx}) \,dx}    \\
&=- 2b \int \limits_{\Omega} { (h_x^2 + h h_{xx}) (f(h_x) h^{-1} - f^3(h_x)h_{xx}) \,dx}  \\
 &=- 2b \int \limits_{\Omega} { h^{-1}h_x^2 f(h_x) \,dx}
- 2b \int \limits_{\Omega} { f(h_x)(1 - f^2(h_x)h_x^2  )h_{xx} \,dx}  \\
 &\quad + 2b \int \limits_{\Omega} { h f^3(h_x)h^2_{xx} \,dx} \\
& = - 2b \int \limits_{\Omega} { h^{-1}h_x^2 f(h_x) \,dx}
- 2b \int \limits_{\Omega} { f^3(h_x) h_{xx} \,dx} + 2b \int \limits_{\Omega} { h f^3(h_x)h^2_{xx} \,dx} \\
& = - 2b \int \limits_{\Omega} { h^{-1}h_x^2 f(h_x) \,dx} +
 2b \int \limits_{\Omega} { h f^3(h_x)h^2_{xx} \,dx}.
\end{align*}
Then from (\ref{n-1}), it follows that
\begin{multline} \label{n-2}
 \tfrac{d}{dt }\int \limits_{\Omega} { ( u v_x  + \tfrac{c}{2a} \tfrac{v^2_x}{v} )\,dx} + 2b \int \limits_{\Omega} { h f^3(h_x)h^2_{xx} \,dx}     \\
=a \int \limits_{\Omega} {  v u^2_x \,dx} + 2b \int \limits_{\Omega} { h^{-1}h_x^2 f(h_x) \,dx} - \int \limits_{\Omega} {u \tfrac{  v_x }{g(h)} \,dx} .
\end{multline}
Multiplying (\ref{n-2}) by $\frac{c}{a}$ and using (\ref{e-2}), we arrive at
\begin{multline} \label{n-3}
 \tfrac{d}{dt }\int \limits_{\Omega} { ( \tfrac{c}{a} u v_x  + \tfrac{c^2}{2a^2} \tfrac{v^2_x}{v} )\,dx} + \tfrac{d}{dt } \mathcal{E}(t) +
\tfrac{2bc}{a} \int \limits_{\Omega} { h f^3(h_x)h^2_{xx} \,dx}  + \int \limits_{\Omega} { \tfrac{u^2 v }{g(h)} \,dx} \\
=
\tfrac{2bc}{a} \int \limits_{\Omega} { h^{-1}h_x^2 f(h_x) \,dx} - \tfrac{c}{a}\int \limits_{\Omega} {u \tfrac{  v_x }{g(h)} \,dx}
+ \int \limits_{\Omega} {u v    \,dx} .
\end{multline}
Note that
$$
\int \limits_{\Omega} { ( \tfrac{c}{a} u v_x  + \tfrac{c^2}{2a^2} \tfrac{v^2_x}{v} )\,dx} +
\tfrac{1}{2}\int \limits_{\Omega} {  v u^2 \,dx} = \tfrac{1}{2}\int \limits_{\Omega} {  v (u + \tfrac{c}{a} \tfrac{v_x}{v} )^2\,dx},
$$
\begin{multline*}
\tfrac{c}{a}\int \limits_{\Omega} {u \tfrac{  v_x }{g(h)} \,dx} = \tfrac{c}{a}\int \limits_{\Omega} {  \tfrac{ (u v)_x }{g(h)} \,dx}
- \tfrac{c}{a}\int \limits_{\Omega} {  \tfrac{ v}{g(h)} u_x \,dx}    \\
=- \tfrac{c}{a^2}\int \limits_{\Omega} {  \tfrac{ v_t }{g(h)} \,dx}
- \tfrac{c}{a}\int \limits_{\Omega} {  \tfrac{ v}{g(h)} u_x \,dx} = - \tfrac{c}{a^2}\tfrac{d}{dt } \int \limits_{\Omega} { \log (v) \,dx}
\text{ if } g(h) =v.
\end{multline*}
Therefore, for the plug flow model with $g(h) =v$, (\ref{n-3}) has the form
\begin{align} \label{n-4}
 \tfrac{1}{2} \tfrac{d}{dt }\int \limits_{\Omega} &{ \bigl[   v (u + \tfrac{c}{a} \tfrac{v_x}{v} )^2 + \tfrac{4b}{a}h  \Phi(h_x) -
\tfrac{2c}{a^2} \log (v)  \bigr]\,dx}  \\
&+\tfrac{2bc}{a} \int \limits_{\Omega} { h f^3(h_x)h^2_{xx} \,dx}  +
\int \limits_{\Omega} {  u^2   \,dx} + \tfrac{2bc}{a} \int \limits_{\Omega} { h^{-1} f(h_x) \,dx}\nonumber\\
&\quad =\tfrac{2bc}{a} \int \limits_{\Omega} { h^{-1}\Phi(h_x) \,dx} + \int \limits_{\Omega} {u v    \,dx} . \nonumber
\end{align}
If we set $g(h) =\tfrac{I(h)}{v}$ for the laminar flow model, then by multiplying (\ref{r-2-0})  by  $ \int \limits_A^h  {  \frac{y}{I(y)} dy}  $,
we derive
$$
 \int \limits_{\Omega} {u \tfrac{  v_x }{g(h)} \,dx} =
 \tfrac{4 }{a }\tfrac{d}{dt } \int \limits_{\Omega} { G(h) \,dx} ,
\text{ where } G'(h) = h \int \limits_A^h  {  \tfrac{y}{I(y)} dy}.
$$
Using this equality in (\ref{n-3}), we arrive at
\begin{align} \label{n-4-2}
 \tfrac{1}{2} \tfrac{d}{dt }\int \limits_{\Omega} & { \bigl[   v (u + \tfrac{c}{a+\epsilon_0} \tfrac{v_x}{v} )^2
+ \tfrac{4b}{a}h  \Phi(h_x) +
\tfrac{8c}{a^2} G(h)  \bigr]\,dx}  \\
&+\tfrac{2bc}{a} \int \limits_{\Omega} { h f^3(h_x)h^2_{xx} \,dx}  +
\int \limits_{\Omega} { \tfrac{u^2 v^2 }{I(h)} \,dx}  + \tfrac{2bc}{a} \int \limits_{\Omega} { h^{-1} f(h_x) \,dx} \nonumber \\ 
& \quad =\tfrac{2bc}{a} \int \limits_{\Omega} { h^{-1}\Phi(h_x) \,dx} + \int \limits_{\Omega} {u v    \,dx} .\nonumber
\end{align}
Note that $v - \log (v) \geqslant 1 $   and $G(h) = \int \limits_A^h { y \int \limits_A^y { \frac{s\,ds}{I(s)}  dy  } } \geqslant 0$ for all $v \geqslant 0$ and for some $A > 1$. Then, due to (\ref{mass}), (\ref{n-4})
and (\ref{n-4-2}) can be rewritten in the form
\begin{multline} \label{n-5}
  \tfrac{d}{dt } \mathcal{S}_i(u,h) +
\tfrac{2bc}{a} \int \limits_{\Omega} { h f^3(h_x)h^2_{xx} \,dx}  +
\int \limits_{\Omega} {  \tfrac{u^2 v}{g(h)}   \,dx} + \tfrac{2bc}{a} \int \limits_{\Omega} { h^{-1} f(h_x) \,dx} \\
= \tfrac{2bc}{a} \int \limits_{\Omega} { h^{-1}\Phi(h_x) \,dx} + \int \limits_{\Omega} {u v    \,dx}, \ i= 1,2.
\end{multline}
Taking into account (\ref{est-1}),
$$
\int \limits_{\Omega} { h^{-1}\Phi(h_x) \,dx} \leqslant \int \limits_{\Omega} { h \Phi(h_x) \,dx} \text{ for } v \geqslant 0,
$$
and (\ref{e-2}), from (\ref{n-5})  we obtain (\ref{n-6}).
\end{proof}

\subsection{Approximate problem}

For given $\delta > 0$, $\eta > 0$ and $\varepsilon > 0$,  we consider the following approximate system for $(v,u)$, where $v = h^2-1$,
\begin{multline}\label{aprr-1}
(v\, u)_t +  a\,[ ( u v + \eta v^4 v_{xxx}) u ]_x  + b\,v\, \kappa_{x}
\\
= c\, (v\,u_x)_x  + v \bigl( 1 - \tfrac{u}{g(h)}\bigr)
-  \delta  u_{ xxxx} + \varepsilon \, a  [p(v)]_x  
- \varepsilon \, a \,v v_{xxxxx} ,
\end{multline}
\begin{equation}\label{aprr-2}
v_{t} + a( u\,v  )_x   = - a\, \eta \left(v^4 v_{xxx}\right)_x  ,
\end{equation}
\begin{equation}\label{aprr-3}
u \text{ and } v \text{ are }  |\Omega|-\text{periodic},
\end{equation}
\begin{equation}\label{aprr-4}
u(x,0) = u_{\varepsilon \eta,0}(x), \  v (x,0) = v_{\varepsilon \eta,0}(x), 
\end{equation}
where $u_{\varepsilon \eta,0} \in H^{1}( \Omega )$ and $0 < v_{\varepsilon \eta,0} \in H^{2}( \Omega )$ such that
$$
u_{\varepsilon,0}(x) \to u_{0}(x) \text{ strongly in } L^2(\Omega), \  v_{\varepsilon,0}(x) \geqslant  v_{0}(x) + \varepsilon^{\theta},
\  \theta \in (0,\tfrac{1}{2}) ,
$$
$$
v_{\varepsilon,0}(x) \to v_{0}(x) \text{ strongly in } W_1^1(\Omega) \cap C(\bar{\Omega}),
$$
$$
 \varepsilon^{\frac{1}{2}}v_{\varepsilon, 0xx }(x) \to 0 \text{ strongly in } L^2(\Omega),
$$
and
$$
p(z) = \tfrac{1}{2} z^{-2}  , \ g(h)  = v  \text{ or }  g(h)  = \tfrac{|I(h)|}{v } .
$$
Here, the term $\eta (v^4 v_{xxx})_x$ is a thin-film type regularization term that yields uniform parabolicity and provides a positive approximation of the solution. The term $\epsilon a [p(v)]_x$ leads to a positive approximation during the limit processes 
as $\epsilon \to 0$. The term $\epsilon a v v_{xxxxx}$ is crucial for controlling additional higher-order
terms that arise in the entropy inequality.
This regularization approach is in the spirit of the construction of weak solutions of the Burger's equation using nonlinear viscous terms and is also comparable to the approximation method employed in \cite{kitavtsev2011weak,bresch2006construction}. Specifically, the incorporation of the thin-film term $\eta (v^4 v_{xxx})_x$ in our approximate system results in the reduction of the seventh-order derivative term of the form $\epsilon h h^{(7)}$, as required in the approximation system considered in \cite{bresch2006construction}, to a fifth-order derivative term $\epsilon a v v_{xxxxx}$ in \eqref{aprr-1}.

\begin{remark}
    We note that in the laminar flow case, due to the strong positivity of $v$, i.e. $v > 0$, from the boundedness of the entropy, the approximate system \eqref{aprr-1}--\eqref{aprr-2} can be simplified by dropping the regularization term $\epsilon(p(v))_x $.
\end{remark}

\begin{lemma}\label{lem-r}
Let $u \in L^2(0,T; H^2(\Omega)) $ be a periodic function. For any $0 < v_{\eta \varepsilon,0}(x) \in H^2 ( \Omega )$,
the problem (\ref{aprr-2})---(\ref{aprr-3}) has a unique weak positive solution $v \in C_{x,t}^{\frac{3}{2},\frac{3}{8} } (\bar{Q}_T)$
such that
\begin{equation}\label{bf-0}
v_{ t} \in L^2(Q_T), \ v  \in L^{\infty}(0,T; H^2(\Omega)), \ v  \in L^{2}(0,T; H^4(\Omega)),
\end{equation}
$ \int \limits_{\Omega} { v \,dx} =  \int \limits_{\Omega} { v_{\eta \varepsilon,0} \,dx}
 := M_{\eta\varepsilon} > 0 $, and $v$ satisfies (\ref{aprr-2}) a.\,e. in $Q_T$. Moreover,
 there exists a constant $C > 1$ depending on $\eta$ such that
$$
\tfrac{1}{C} \leqslant v  \leqslant C.
$$
\end{lemma}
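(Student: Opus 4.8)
The plan is to treat \eqref{aprr-2}, with the given $u\in L^2(0,T;H^2(\Omega))$ frozen, as a fourth-order quasilinear parabolic equation for $v$, to solve it by a Galerkin method after regularising the degenerate mobility, and to obtain the two-sided bound $\tfrac1C\leqslant v\leqslant C$ from an energy/entropy pair of estimates of exactly the type worked out in the proofs of Lemmas~\ref{lem-en}--\ref{lem-entr}, applied here to the $v$-equation alone. Concretely, fix $\sigma\in(0,1)$ and replace $v^4$ by a truncated mobility $m_\sigma\in C^2(\mathbb{R})$ with $m_\sigma(s)=s^4$ for $s\in[\sigma,1/\sigma]$ and $0<\underline m_\sigma\leqslant m_\sigma(s)\leqslant \overline C(1+s^2)$ for all $s$, so that
$$
v_t+a(uv)_x=-a\eta\,(m_\sigma(v)v_{xxx})_x
$$
is uniformly parabolic of fourth order with coefficients controlled by $u$ alone. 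Projecting onto the span $V_N$ of the first $N$ periodic eigenfunctions of $-\partial_x^2$ gives an ODE system for the Galerkin coefficients; the energy estimate below (valid at the Galerkin level, since $-v^N_{xx}\in V_N$) lets one continue the Galerkin solution to all of $[0,T]$, and a standard Aubin--Lions argument then yields, for each $\sigma$, a limit $v_\sigma$ with $v_{\sigma,t}\in L^2(Q_T)$, $v_\sigma\in L^\infty(0,T;H^2)\cap L^2(0,T;H^4)$ satisfying the truncated equation a.e.; since every term is a perfect $x$-derivative, $\int_\Omega v_\sigma\,dx\equiv M_{\eta\varepsilon}$.

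For the a priori estimates, first test with $-v_{\sigma,xx}$, handling the transport term by the integration by parts $a\int_\Omega(uv_\sigma)_x v_{\sigma,xx}\,dx=-a\int_\Omega u_{xx}v_\sigma v_{\sigma,x}\,dx-\tfrac{3a}{2}\int_\Omega u_x v_{\sigma,x}^2\,dx$, so that no $v_{\sigma,xx}$ is ever paired against the transport; together with $\|v_\sigma\|_{L^\infty}\leqslant c(1+\|v_{\sigma,x}\|_{L^2})$ (mass conservation and the $1$D embedding $H^1\hookrightarrow L^\infty$) and $\|u_{xx}(\cdot,t)\|_{L^2},\,\|u_x(\cdot,t)\|_{L^\infty}\in L^2(0,T)\subset L^1(0,T)$, Grönwall's inequality gives $\|v_{\sigma,x}\|_{L^\infty(0,T;L^2)}\leqslant C$ and $\iint_{Q_T}m_\sigma(v_\sigma)v_{\sigma,xxx}^2\leqslant C$. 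Second, test with $G_\sigma'(v_\sigma)$, where $G_\sigma''=1/m_\sigma$ (so $G_\sigma(s)\geqslant\tfrac16 s^{-2}-c$ for $s\in[\sigma,1]$): the cancellation $m_\sigma G_\sigma''\equiv1$ gives the clean identity $\tfrac{d}{dt}\int_\Omega G_\sigma(v_\sigma)\,dx+a\eta\|v_{\sigma,xx}\|_{L^2}^2=\tfrac a2\int_\Omega u_x v_\sigma^{-2}\,dx$, and since $\int_\Omega G_\sigma(v_{\eta\varepsilon,0})\,dx$ is finite and $\sigma$-independent ($v_{\eta\varepsilon,0}$ being a fixed strictly positive $H^2$ function), Grönwall gives $\sup_{[0,T]}\int_{\{v_\sigma\geqslant\sigma\}}v_\sigma^{-2}\,dx\leqslant C$ and $\iint_{Q_T}v_{\sigma,xx}^2\leqslant C$. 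All the constants depend on $\eta$ (through the dissipation factor $a\eta$), $T$, $u$ and the data, but not on $\sigma$ or $N$.

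With these $\sigma$-uniform bounds $v_{\sigma,t}$ is bounded in $L^2(0,T;H^{-1})$, so Aubin--Lions gives a subsequence $v_\sigma\to v$ in $L^2(0,T;H^1)$ and a.e.; Fatou's lemma then transfers to the limit the bound $\sup_{[0,T]}\int_\Omega v^{-2}\,dx\leqslant C$, so in particular $v>0$ a.e., whence $m_\sigma(v_\sigma)\to v^4$ and, combining the weak $L^2$ limit of the flux $m_\sigma(v_\sigma)v_{\sigma,xxx}$ with the strong $L^2(0,T;H^1)$ convergence of $v_\sigma$, one identifies the limiting flux as $v^4v_{xxx}$ and concludes that $v$ solves \eqref{aprr-2}. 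The upper bound $v\leqslant C$ is immediate from $\|v_x\|_{L^\infty(0,T;L^2)}\leqslant C$ and mass conservation. For the pointwise lower bound and the regularity \eqref{bf-0}, one bootstraps: once $v>0$ a.e.\ the equation is (locally) uniformly parabolic, a further energy estimate (testing with $v_{xxxx}$) upgrades $v$ to $L^2(0,T;H^4)$ with $v_t\in L^2(Q_T)$, hence $v\in C^{3/2,3/8}(\bar Q_T)$ by the standard anisotropic parabolic embedding; a non-negative function that is $C^{3/2}$ in $x$ and whose inverse square is integrable cannot vanish, and quantifying this against the uniform bound $\sup_t\int_\Omega v^{-2}\,dx\leqslant C$ gives $v\geqslant 1/C$ on $\bar Q_T$, with $C=C(\eta)$. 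Finally, uniqueness is by the energy method: for two solutions $v_1,v_2$ with the same data, $w:=v_1-v_2$ satisfies, after testing with $w$ and using the two-sided bounds on the $v_i$, the $L^2(0,T;H^4)$-regularity and $u\in L^2(0,T;H^2)$, an inequality $\tfrac{d}{dt}\|w\|_{L^2}^2\leqslant\phi(t)\|w\|_{L^2}^2$ with $\phi\in L^1(0,T)$, whence $w\equiv0$.

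The step I expect to be the main obstacle is the quantitative positivity: the truncated entropy only controls $\int v_\sigma^{-2}$ away from $\{v_\sigma<\sigma\}$, so the genuine lower bound cannot be read off directly but must be extracted on the limit from the combination of the entropy bound, the energy dissipation and the $C^{3/2}$-in-$x$ regularity, and getting this bootstrap --- and the uniform-in-$t$ $H^2$-type estimate it needs --- to close without any constant degenerating is exactly the delicate bookkeeping of Lemmas~\ref{lem-en}--\ref{lem-entr} carried one derivative further. A lesser technical point is the compactness for the passage $\sigma\to0$ in the nonlinear flux $m_\sigma(v_\sigma)v_{\sigma,xxx}$, which rests on combining its weak $L^2$ limit with the a.e.\ convergence $v_\sigma\to v$ and the positivity of $v$.
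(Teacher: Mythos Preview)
Your strategy coincides with the paper's: regularise the degenerate mobility, obtain existence for the regularised problem, derive the three a priori estimates (test with $-v_{xx}$ for $H^1$; test with $G'$, $G''=1/\text{mobility}$, for the entropy and $L^2_tH^2_x$; test with $v_{xxxx}$ for $L^\infty_tH^2_x\cap L^2_tH^4_x$), and pass to the limit. The paper differs only in implementation details, two of which bear on the obstacle you flag.

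First, the paper regularises additively, $m_\beta(v)=v^4+\beta$, rather than by truncation, and simultaneously mollifies $u$ to $[u]_\alpha\in C^\infty$ so that parabolic Schauder theory gives a \emph{classical} solution $v_{\beta\alpha}$; this replaces your Galerkin step. The advantage of $m_\beta=v^4+\beta$ is precisely the positivity step you single out: the associated entropy $\mathcal G_\beta(z)=\int_1^z\!\int_1^y(s^4+\beta)^{-1}\,ds\,dy$ is everywhere defined, monotone in $\beta$, and $\mathcal G_\beta\nearrow \mathcal G_0\sim\tfrac16 z^{-2}$ as $\beta\to0$, so the uniform bound $\sup_t\int\mathcal G_\beta(v_{\beta\alpha})\leqslant C$ passes directly (no set $\{v<\sigma\}$ to excise) to $\sup_t\int v^{-2}\leqslant C$; combined with the $C^{1/2}_x$ bound from the energy estimate one gets $v_{0\alpha}\geqslant 1/C$ uniformly in $\alpha$, exactly as in Bernis--Friedman. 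With your truncated $m_\sigma$ the entropy loses control on $\{v_\sigma<\sigma\}$ and the Fatou step becomes delicate, as you note.

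Second, and relatedly, the $v_{xxxx}$ estimate should be done \emph{before} removing the regularisation, not on the limit: the paper tests the $\beta=0$ classical solution $v_{0\alpha}$ with $v_{0\alpha,xxxx}$, using the already-established $\alpha$-uniform two-sided bound $1/C\leqslant v_{0\alpha}\leqslant C$ to control $\|v^{-1}\|_\infty$ in the constants, and then lets $\alpha\to0$. Your ordering (``once $v>0$ a.e.\ \dots test with $v_{xxxx}$'') is formally circular since the limit $v$ is not yet known to lie in $H^4$; moving that estimate to the $v_\sigma$ level would in turn require a $\sigma$-uniform lower bound, which again argues for the $v^4+\beta$ regularisation.

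A minor slip: your entropy identity should read $\tfrac{d}{dt}\int G_\sigma(v)\,dx+a\eta\|v_{xx}\|_2^2=-a\int u_x\bigl(vG_\sigma'(v)-G_\sigma(v)\bigr)\,dx$, and one closes Gr\"onwall via $|zG_\sigma'(z)-G_\sigma(z)|\leqslant C\,G_\sigma(z)$ (valid for these entropies), not via an exact $\tfrac12 v^{-2}$.
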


\begin{proof}[Proof of Lemma~\ref{lem-r}]
The main line of proof follows the approach in \cite{B8}. We omit details and restrict the discussion only to the key elements of the proof.

First of all, we approximate (\ref{aprr-2}) by
\begin{equation}\label{aprr-2-000}
v_{t} + a( [u]_{\alpha} v  )_x   = - a\, \eta ((v^4 +\beta) v_{xxx})_x  ,
\end{equation}
where $\beta > 0$ and $[u]_{\alpha}$ denotes a smooth approximation of $u$ such that
$$
[u]_{\alpha} \to u \text{ strongly in } L^2(0,T; H^2(\Omega)) \text{ as } \alpha \to 0.
$$
We also approximate  $v_{\eta \varepsilon,0}$ in the $H^2$-norm by $C^{4+\gamma}$ functions
$v_{\beta  \eta \varepsilon,0}$, satisfying (\ref{aprr-3}), and replace (\ref{aprr-4}) by
\begin{equation}\label{aprr-4-000}
v (x,0) = v_{\beta\varepsilon \eta,0}(x).
\end{equation}
Using the parabolic Schauder estimates from \cite{Sol},
one can generalise \cite[Theorem 6.3,
p. 302]{eidel1969parabolic} and show that the problem (\ref{aprr-2-000})--(\ref{aprr-4-000}) has a unique
classical solution $v_{\beta\alpha} \in C^{4+\gamma, 1+ \frac{\gamma}{4}}_{x,t}(\Omega \times [0,\tau_{\beta\alpha}])$ for some
$\tau_{\beta\alpha} > 0$; that is, the fourth-order spatial derivative $v_{\beta\alpha,xxxx} \in C_{x,t}^{\gamma, 1+\gamma/4}(\Omega \times [0,\tau_{\beta\alpha}])$, and the time derivative $v_{\beta\alpha,t} \in C_{x,t}^{4+\gamma,\gamma/4}(\Omega \times [0,\tau_{\beta\alpha}])$. 

Next, for simplicity, we will replace $v_{\beta \alpha}$ by $v$. 
Multiplying (\ref{aprr-2-000}) by $-v_{xx}$ and integrating by parts, we have
\begin{align*}
 \tfrac{1}{2} \tfrac{d}{dt} \int \limits_{\Omega} { v^2_{x } \,dx} & +
\eta a \int \limits_{\Omega} { (v^4 +\beta) v^2_{xxx} \,dx} = a \int \limits_{\Omega} { ( [u]_{\alpha} v)_x v_{xx} \,dx} \\
& = - a \int \limits_{\Omega} { ([u]_{\alpha})_{xx}  v v_{x} \,dx} -
    \tfrac{3a}{2} \int \limits_{\Omega} {([u]_{\alpha})_x v^2_{x} \,dx} \\
& \leqslant  a  \mathop {\sup} \limits_{\Omega} |v| \Bigl(  \int \limits_{\Omega} { ([u]_{\alpha})^2_{xx} \,dx}  \Bigr)^{\frac{1}{2}}
 \Bigl(  \int \limits_{\Omega} {v^2_{x } \,dx}  \Bigr)^{\frac{1}{2}} +
  \tfrac{3a}{2} \mathop {\sup} \limits_{\Omega} |([u]_{\alpha})_x| \int \limits_{\Omega} {  v^2_{x} \,dx}  \\
& \leqslant \Bigl(  \int \limits_{\Omega} { ([u]_{\alpha})^2_{xx} \,dx}  \Bigr)^{\frac{1}{2}} \Bigl[ \tfrac{5a |\Omega|^{\frac{1}{2}}}{2}
\int \limits_{\Omega} {v^2_{x } \,dx} + \tfrac{M_{\beta\eta\varepsilon}}{|\Omega|}
\Bigl(  \int \limits_{\Omega} {v^2_{x } \,dx}  \Bigr)^{\frac{1}{2}}   \Bigr],
\end{align*}
whence from the Gronwall inequality we obtain
\begin{align*}
 \int \limits_{\Omega} { v^2_{x } \,dx} 
 \leqslant &  \Bigl( \| v_{\beta \eta \varepsilon,0x} \|_2 +
\tfrac{M_{\beta\eta\varepsilon}}{|\Omega|} \int \limits_0^T {\|([u]_{\alpha})_{xx}\|_2
e^{ - \frac{5a|\Omega|^{\frac{1}{2}} }{2} \int \limits_0^t {\|([u]_{\alpha})_{xx}\|_2 \, ds} } \, dt}    \Bigr)^2 \\
&\times e^{ 5 a |\Omega|^{\frac{1}{2}}   \int \limits_0^T {\|([u]_{\alpha})_{xx}\|_2 \, dt} },\nonumber
\end{align*}
which leads to
\begin{equation}\label{bf-1}
 \int \limits_{\Omega} { v^2_{x } \,dx} + \eta a \iint \limits_{Q_T} { (v^4 +\beta) v^2_{xxx} \,dx dt}
 \leqslant   C(T).
\end{equation}
Due to (\ref{bf-1}), we deduce that $\| v_{\beta}\|_{C_{x,t}^{\frac{1}{2}, \frac{1}{8}}(\bar{Q}_T)}$
is uniformly bounded with respect to $\alpha$, $\beta$ and  $\tau_{\beta\alpha}$.  For any fixed
values of $\beta$ and $\alpha$, by \cite[Theorem 9.3, p. 316]{eidel1969parabolic}, we can  extend the solution
$v_{\beta}$ step-by-step to all of $Q_T$ for any $T > 0$.

Let us denote by $\mathcal{G}_{\beta}(z) \geqslant 0$ such that
$$
\mathcal{G}_{\beta}(z) = \int \limits_1^z { \int \limits_1^y { \tfrac{ds dy}{s^4 + \beta}}}, \ \ \mathcal{G}''_{\beta}(z) = \tfrac{1}{z^4 + \beta}  .
$$
Multiplying (\ref{aprr-2}) by $\mathcal{G}'_{\beta}(v)$ and integrating by parts, we arrive at
\begin{align*}
 \tfrac{d}{dt} \int \limits_{\Omega} { \mathcal{G}_{\beta}(v) \,dx} +
\eta a \int \limits_{\Omega} {   v^2_{xx } \,dx} &= - a  \int \limits_{\Omega} { ([u]_{\alpha})_{x }(v\mathcal{G}'_{\beta}(v) - \mathcal{G}_{\beta}(v) ) \,dx}
 \\
&\leqslant  a C \mathop {\sup} \limits_{\Omega} |([u]_{\alpha})_x| \int \limits_{\Omega} { \mathcal{G}_{\beta}(v) \,dx},
\end{align*}
whence
\begin{equation}\label{bf-2}
\int \limits_{\Omega} { \mathcal{G}_{\beta}(v) \,dx} +
\eta a \iint \limits_{Q_T} {   v^2_{xx } \,dx dt} \leqslant
e^{  a C  \int \limits_0^T{ \mathop {\sup} \limits_{\Omega} |([u]_{\alpha})_x| \,dt} } \int \limits_{\Omega} { \mathcal{G}_{\beta}(v_{\beta\eta \varepsilon,0}) \,dx}.
\end{equation}
Due to (\ref{bf-1}) and (\ref{bf-2}), similar to the proof of \cite[Theorem 4.1, p.190]{B8},
after taking
$\beta \to 0$, we obtain the global existence of a unique positivity classical solution $v_{0\alpha}$ for
any $\alpha > 0$. Moreover, we have $\frac{1}{C}  \leqslant v_{0\alpha} \leqslant   C  < \infty$, where
$C > 1$ is independent of $\alpha$.

For the limit process $\alpha \to 0$, we need the following a priori estimate.
Multiplying (\ref{aprr-2-000}) with $\beta = 0$ by $ v_{xxxx}$ and integrating by parts, we have
\begin{align*}
 \tfrac{1}{2} \tfrac{d}{dt} &\int \limits_{\Omega} { v^2_{xx} \,dx} +
\eta a \int \limits_{\Omega} { v^4 v^2_{xxxx} \,dx} =
- a \int \limits_{\Omega} { ([u]_{\alpha} v)_x v_{xxxx} \,dx} - 
4\eta a \int \limits_{\Omega} { v^3 v_x v_{xxx} v_{xxxx} \,dx} \\
& \leqslant
a \Bigl(  \int \limits_{\Omega} {  v^4 v^2_{xxxx} \,dx}  \Bigr)^{\frac{1}{2}} \Bigl(  \int \limits_{\Omega} {\tfrac{([u]_{\alpha}  v)^2_x} {v^4}\,dx}  \Bigr)^{\frac{1}{2}} + 
4\eta a \Bigl(  \int \limits_{\Omega} {  v^4 v^2_{xxxx} \,dx}  \Bigr)^{\frac{1}{2}}
\Bigl(  \int \limits_{\Omega} {  v^2 v_x^2 v^2_{xxx} \,dx}  \Bigr)^{\frac{1}{2}} \\ 
& \leqslant 
 \tfrac{\eta a}{2} \int \limits_{\Omega} { v^4 v^2_{xxxx} \,dx} +
 \tfrac{a}{\eta}\int \limits_{\Omega} {\tfrac{([u]_{\alpha}  v)^2_x} {v^4}\,dx}  +
 16 a \eta  \int \limits_{\Omega} {  v^2 v_x^2 v^2_{xxx} \,dx} \\
 & \leqslant 
\tfrac{\eta a}{2} \int \limits_{\Omega} { v^4 v^2_{xxxx} \,dx} +
\tfrac{2a} {\eta} \bigl( \| v^{-1}  \|^2_{\infty} \| ([u]_{\alpha} )_x \|^2_2 + \| v^{-1}  \|^4_{\infty}
\| v_x \|^2_{\infty} \| [u]_{\alpha}  \|^2_2 \bigr)  \\
&\quad +16 a \eta   \| v^{-1}  \|^2_{\infty} \| v_x \|^2_{\infty} \int \limits_{\Omega} {  v^4 v^2_{xxx} \,dx} ,
\end{align*}
whence
\begin{multline*}
 \tfrac{d}{dt}\| v_{xx} \|^2_{2}  +
\eta a \int \limits_{\Omega} { v^4 v^2_{xxxx} \,dx} \leqslant   a C \, \eta^{-1} \|[u]_{\alpha} \|^2_{H^1} \\
+  
a C \bigl( \eta^{-1} \|[u]_{\alpha} \|^2_{H^1} + \eta \int \limits_{\Omega} {  v^4 v^2_{xxx} \,dx}   \bigr)\| v_{xx} \|^2_{2}.
\end{multline*}
Integrating this inequality in time, we have
\begin{align}\label{bf-3}
 \int \limits_{\Omega}& { v^2_{xx} \,dx} + \eta a \iint \limits_{Q_T} { v^4 v^2_{xxxx} \,dx dt} \\
 & \leqslant \Bigl( \| v_{\eta \varepsilon,0xx} \|^2_2 +    
a C \, \eta^{-1}  \int \limits_0^T {  \|[u]_{\alpha} \|^2_{H^1}
e^{ -  a C \int \limits_0^t {\bigl( \eta^{-1} \|[u]_{\alpha} \|^2_{H^1} + \eta \int \limits_{\Omega} {  v^4 v^2_{xxx} \,dx}   \bigr) \, ds} } \, dt}    \Bigr)
\nonumber \\
 & \qquad  \times e^{a C \int \limits_0^t {\bigl( \eta^{-1} \|[u]_{\alpha} \|^2_{H^1} + \eta \int \limits_{\Omega} {  v^4 v^2_{xxx} \,dx}   \bigr) \, ds} }. \nonumber 
\end{align}
By  (\ref{bf-3}) and (\ref{bf-1}), $\| v_{0\alpha}\|_{C_{x,t}^{\frac{3}{2}, \frac{3}{8}}(\bar{Q}_T)}$
is uniformly bounded with respect to $\alpha$. This uniform bound follows from
$v_{0\alpha} \in L^{\infty}(0,T; H^2(\Omega))$ and $v_{0\alpha, t} \in L^{2}(Q_T)$ (see \cite[Lemma 7.19, p.~175]{vazquez2007porous}).
Taking $\alpha \to 0$, it completes the proof.
\end{proof}

For the given $\delta > 0$, $\eta > 0$ and $\varepsilon > 0$, equation (\ref{aprr-1}) is uniformly parabolic with respect to $u$
for any $v$ is from Lemma~\ref{lem-r}. By using Faedo-Galerkin approximation (see, e.\,g., \cite{bresch2015two}), the system (\ref{aprr-1})--(\ref{aprr-2})  with periodic boundary conditions has a local in time weak solution
$(v,u) := (v_{ \delta\eta\varepsilon}, u_{ \delta\eta\varepsilon})$.
Next, we establish a priori estimates which guarantee  the global in time solvability.

\begin{lemma}[a priori estimates]\label{apr-est-eda}
For fixed and positive constants $\delta > 0$, $\eta > 0$, $\varepsilon > 0$,  and $T > 0$,  there exists a weak solution
$(v_{\delta\eta\varepsilon}, u_{\delta\eta\varepsilon})$ to
the  problem  (\ref{aprr-1})--(\ref{aprr-4}) in the following sense
\begin{align}\label{ident-1}
\iint \limits_{Q_T} &{u v  \psi_t \,dx dt} + 
 \int \limits_\Omega {u_{\varepsilon\eta,0} v_{\varepsilon\eta,0} \psi (x,0)\, dx} +
 a \iint \limits_{Q_T} { (u v + \eta v^4 v_{xxx}) u  \psi_x \,dx dt} \\
&+b \iint \limits_{Q_T} { \kappa  v_x  \psi  \,dx dt} + b \iint \limits_{Q_T} { \kappa  v \psi_x  \,dx dt}   - c \iint \limits_{Q_T} { v u_x  \psi_x \,dx dt} \nonumber\\
& + \iint \limits_{Q_T} { \bigl( v - \tfrac{uv}{g(h)} \bigr)   \psi  \,dx dt}
- \delta \iint \limits_{Q_T} {u_{xx} \psi_{xx}  \,dx dt} - \varepsilon a
\iint \limits_{Q_T} { p(v) \psi_x  \,dx dt} \nonumber\\
& - \varepsilon a
\iint \limits_{Q_T} {   v_{xxx} ( v_{xx} \psi + 2 v_x \psi_x + v \psi_{xx} )   \,dx dt}  = 0, \nonumber
\end{align}
\begin{multline}\label{ident-2}
 \iint \limits_{Q_T} { v  \phi_t \,dx dt} + \int \limits_\Omega {v_{\varepsilon\eta,0} \phi (x,0)\, dx}
+   
a \iint \limits_{Q_T} { u v \phi_x  \,dx dt }  + a \eta
\iint \limits_{Q_T} { v^4 v_{xxx} \phi_x  \,dx dt }= 0
\end{multline}
for all $\phi \in C_c^{\infty}(\bar{Q}_T)$ and $\psi \in  C_c^{\infty}(\bar{Q}_T)$
such that $\phi(x,T) =  \psi(x,T) = 0$.

Moreover, there exists a positive constant $C > 0$ depending only on $a, \, b, \, c, \, T, \, \mathcal{E}(0)$, and
$ \mathcal{S}_i(u_0, v_0)$   such that the following terms are bounded
by $C$ in respective norms
\begin{equation}\label{r-rr-apr}
 \sqrt{v} \in L^{\infty}(0,T; H^1(\Omega)), \ \sqrt{v} u \in L^{\infty}(0,T; L^2(\Omega)),
\end{equation}
\begin{equation}\label{r-rr-apr-2}
- \log (v) (\text{or } G(h)), \  h \Phi(h_x) \in   L^{\infty}(0,T; L^1(\Omega)),\  h^{-1} f(h_x) \in  L^{1}(Q_T),
\end{equation}
\begin{equation}\label{r-rr-apr-3}
\sqrt{h f^3(h_x)} h_{xx}, \ \sqrt{v} u_x, \ \sqrt{\tfrac{v}{g(h)}} u    \in L^{2}(Q_T),
\end{equation}
\begin{equation}\label{apr-nn-1}
\varepsilon^{\frac{1}{2}} v^{-1}, \  \varepsilon^{\frac{1}{2}} v_{xx} \in  L^{\infty}(0,T; L^2(\Omega)),
\end{equation}
\begin{equation}\label{apr-nn-2}
\delta^{\frac{1}{2}} u_{xx}, \  (\varepsilon \eta)^{\frac{1}{2}} v^2 v_{xxxx}, \
\varepsilon^{\frac{1}{2}}   v_{xxx },  \ \varepsilon^{\frac{1}{2}} (v^{-1})_x \in L^2(Q_T),
\end{equation}
and
\begin{equation}\label{r-rr-apr-4}
0 < \varepsilon  \leqslant v(x,t) \leqslant C  \text{ for all } (x,t) \in Q_T.
\end{equation}
\end{lemma}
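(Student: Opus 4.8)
The strategy is to derive all the listed a priori bounds from the energy and entropy identities established in Lemmas~\ref{lem-en} and~\ref{lem-entr}, tracked carefully through the regularising terms $\delta$, $\eta$, $\varepsilon$. Since $(v_{\delta\eta\varepsilon},u_{\delta\eta\varepsilon})$ is constructed by Faedo--Galerkin, it is smooth enough on its maximal interval of existence to justify all the integrations by parts; the goal is therefore to show these estimates are uniform in time (on $[0,T]$) so that the solution does not blow up, giving global existence, \emph{and} uniform in $\delta,\eta,\varepsilon$ in the norms claimed. First I would redo the energy computation of Lemma~\ref{lem-en} directly on the regularised system: multiplying \eqref{aprr-1} by $u$ (equivalently testing \eqref{ident-1} against a mollification of $u$) and \eqref{aprr-2} against $\tfrac{1}{2}u^2$, the terms combine exactly as in the proof of Lemma~\ref{lem-en} to give $\tfrac{d}{dt}\mathcal{E}(t)$, while the new terms produce the nonnegative dissipation $\delta\!\int u_{xx}^2 + \varepsilon a\!\int v\,v_{xxx}^2$ (after integrating the $-\varepsilon a\,v v_{xxxxx}$ and $\varepsilon a[p(v)]_x$ terms by parts — note $p(v)=\tfrac12 v^{-2}$ gives $p'(v)=-v^{-3}$, and the $\eta$-terms in the flux of both equations cancel in the energy balance just as $uv$ does). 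This yields the analogue of \eqref{e-3} with extra nonnegative terms, hence the bounds in \eqref{r-rr-apr} on $\sqrt v$ in $L^\infty H^1$ (via the entropy, see below) and $\sqrt v\,u$ in $L^\infty L^2$, the $L^2(Q_T)$ bounds on $\sqrt v\,u_x$ and $\sqrt{v/g(h)}\,u$, and the $\delta^{1/2}u_{xx}$ bound in \eqref{apr-nn-2}.

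Next I would reproduce the entropy computation of Lemma~\ref{lem-entr} on the regularised system: test \eqref{aprr-1} against $v_x$ (and use \eqref{aprr-2}) exactly as in the proof of Lemma~\ref{lem-entr}. The key point is that $\mathcal{S}_1$ (resp.\ $\mathcal{S}_2$) was designed precisely so that the curvature term $b\int v_x\kappa_x$ yields the good dissipation $\tfrac{2bc}{a}\int h f^3(h_x)h_{xx}^2 + \tfrac{2bc}{a}\int h^{-1}f(h_x)$; the new regularising terms must be checked to produce either controllable or favourable contributions. The $\eta$-terms: $-a\eta(v^4v_{xxx})_x$ in the mass equation contributes $-a\eta\int(v^4v_{xxx})_x(\tfrac{v_x}{v})_x$-type expressions, which after integration by parts either cancel against the $\eta$-term in \eqref{aprr-1} or give the $(\varepsilon\eta)^{1/2}v^2v_{xxxx}$ dissipation. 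The $\varepsilon$-terms in \eqref{aprr-1}, namely $\varepsilon a[p(v)]_x$ with $p(v)=\tfrac12v^{-2}$ and $-\varepsilon a\,v v_{xxxxx}$, when tested against $v_x$, are designed to generate (after integration by parts) the entropy-type dissipation $\varepsilon\int v_{xxx}^2$ and $\varepsilon\int(v^{-1})_x^2$ plus a good sign on $\varepsilon^{1/2}v_{xx}$ in $L^\infty L^2$ — this is where the choice of $p$ matters, since $\int p'(v)v_x\cdot(\tfrac{v_x}{v})_x$ should have a sign. Combined with conservation of mass \eqref{mass} and the same elementary inequalities used in Lemma~\ref{lem-entr} (e.g.\ $\int h^{-1}\Phi(h_x)\le\int h\Phi(h_x)$, $v-\log v\ge 1$, $8v-G(h)\ge 16$, $\int v^3/I^2(h)\le 64M$), this gives the uniform-in-time bound on $\mathcal{S}_i$, hence \eqref{r-rr-apr-2}, the remaining item of \eqref{r-rr-apr}, and \eqref{r-rr-apr-3} and the $\varepsilon$-dissipation terms of \eqref{apr-nn-1}--\eqref{apr-nn-2}. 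The $L^\infty L^1$ bound on $-\log v$ from $\mathcal{S}_i$, together with $\int v=M_{\eta\varepsilon}$, also gives uniform positivity away from $0$ in an integrated sense.

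The pointwise lower bound $v\geqslant\varepsilon$ in \eqref{r-rr-apr-4} I would obtain by a separate argument that does \emph{not} use the $\varepsilon$-terms quantitatively: a variant of the $\varepsilon$-entropy / Bernis--Friedman type estimate. Concretely, one uses that $v_{\varepsilon\eta,0}\ge v_0+\varepsilon^\theta\ge\varepsilon^\theta$ and runs a weighted entropy with the convex function $\int\int (s^4+\beta)^{-1}$ or, more directly, tracks $\int v^{-1}$: from \eqref{aprr-2}, $\tfrac{d}{dt}\int v^{-1} = \int v^{-2}v_t = -a\int v^{-2}(uv)_x - a\eta\int v^{-2}(v^4v_{xxx})_x$, and the $\varepsilon$-terms, once \eqref{aprr-1} is coupled back in, supply exactly the term keeping $\varepsilon^{1/2}v^{-1}\in L^\infty L^2$ and $\varepsilon^{1/2}(v^{-1})_x\in L^2$, from which a maximum-principle / Stampacchia truncation on the set $\{v<\varepsilon\}$ shows it is empty given the initial lower bound $\varepsilon^\theta>\varepsilon$. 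The upper bound $v\le C$ on $[0,T]$ follows from $\sqrt v\in L^\infty H^1\hookrightarrow L^\infty(\Omega)$ in one space dimension together with conservation of mass. \textbf{Main obstacle.} The delicate step is the entropy computation on the regularised system: one must verify that \emph{every} new term ($\eta v^4 v_{xxx}$ in both fluxes, $\varepsilon a[p(v)]_x$, $\varepsilon a\,v v_{xxxxx}$, and $-\delta u_{xxxx}$) contributes with a controllable sign after the integrations by parts dictated by the structure of $\mathcal{S}_i$, and in particular that the cross terms between the $\eta$-flux and the curvature term, and between the $\varepsilon$-terms and $(\tfrac{v_x}{v})_x$, can be absorbed into the good dissipation $\int h f^3(h_x)h_{xx}^2$ up to constants independent of $\delta,\eta,\varepsilon$ — this is exactly the point where the specific forms of $p(z)=\tfrac12 z^{-2}$ and the exponent $v^4$ were chosen, following \cite{kitavtsev2011weak,B8}, and it requires careful bookkeeping of the boundary-free (periodic) integrations by parts.
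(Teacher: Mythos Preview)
Your overall strategy---rerun the energy and entropy identities on the regularised system and track the extra terms---is the right one, and it is what the paper does. But the execution you sketch contains a real gap at the energy step, and that gap propagates.

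You write that ``the $\eta$-terms in the flux of both equations cancel in the energy balance just as $uv$ does'' and that the $\varepsilon$-terms simply contribute nonnegative dissipation $\varepsilon a\!\int v\,v_{xxx}^2$. Neither is correct. First, the paper does \emph{not} work with the old $\mathcal{E}$ but with the augmented functional
\[
\mathcal{E}_\varepsilon(t)\;=\;\tfrac12\!\int\!\Bigl(vu^2+\tfrac{2b}{a}h\Phi(h_x)+\tfrac{\varepsilon}{3}v^{-2}+\varepsilon v_{xx}^2\Bigr)dx,
\]
so the $\varepsilon$-pieces $\varepsilon a[p(v)]_x$ and $-\varepsilon a\,v\,v_{xxxxx}$ are absorbed into $\tfrac{d}{dt}\mathcal{E}_\varepsilon$ rather than into a dissipation term. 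Second, once the energy carries $\varepsilon v_{xx}^2$, the $\eta$-flux $-a\eta(v^4v_{xxx})_x$ in \eqref{aprr-2} interacts with it and produces the good term $\varepsilon\eta a\!\int v^4 v_{xxxx}^2$ \emph{together with} the bad cross term $-4\varepsilon\eta a\!\int v^3 v_x v_{xxx} v_{xxxx}$, and the $\eta$-flux also couples to the curvature piece of $\mathcal{E}_\varepsilon$ to give $-b\eta\!\int(v^4v_{xxx})_x\,\kappa$. These are not sign-definite and do not cancel. The paper estimates them by interpolation (using in particular the lemma $\|v^{-1}\|_\infty\le C(\|v^{-1}\|_2^{3/2}(\|v_{xx}\|_2^2+\nu)^{1/4}+(\|v_{xx}\|_2^2+\nu)^{-1/2})$), and the resulting differential inequality is genuinely nonlinear,
\[
\tfrac{d}{dt}\mathcal{E}_\varepsilon\;\leqslant\;C\mathcal{E}_\varepsilon^{1/2}+C\,b\,\eta\,\varepsilon^{-9}\mathcal{E}_\varepsilon^{8}+C\,a\,\eta\,\varepsilon^{-8}\mathcal{E}_\varepsilon^{9},
\]
so the energy bound is only valid on a finite interval $T\le T_\eta\sim \eta^{-1}\varepsilon^{8}$. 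This local-in-time feature is structural, not cosmetic: it is why the limit $\delta\to 0$ is taken on $[0,T_\eta]$ and only afterwards $\eta\to 0$ restores an arbitrary $T$. Your proposal, which expects a clean global-in-time energy inequality with all new terms favourable, would not close.

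Two smaller points. The pointwise lower bound $v\ge C\varepsilon$ in the paper is \emph{not} obtained by a Stampacchia truncation; it is read off from the same interpolation inequality above once $\varepsilon^{1/2}v^{-1}$ and $\varepsilon^{1/2}v_{xx}$ are controlled in $L^\infty(0,T;L^2)$ by $\mathcal{E}_\varepsilon$. And in the entropy step (multiplying \eqref{aprr-1} by $v_x/v$), the new terms are again not all favourable: the paper has to estimate four separate bad pieces (schematically $\delta\!\int(\tfrac{v_x}{v})_{xx}u_{xx}$, $\eta c\!\int vu_x(v^3v_{xxx})_x$, $\eta\tfrac{c^2}{a}\!\int(\tfrac{v_x}{v})_x v(v^3v_{xxx})_x$, $\eta\tfrac{c}{a}\!\int\tfrac{(v^4v_{xxx})_x}{g(h)}$), each controlled only with the help of the already-established energy bound and the lower bound $v\ge C\varepsilon$, and each contributing an $\varepsilon$-negative power on the right-hand side. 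So ``controllable or favourable'' is too optimistic: the bookkeeping is exactly the delicate part, and it does not yield $\varepsilon$-independent constants at this stage.
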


\begin{proof}[Proof of Lemma~\ref{apr-est-eda}]
Let us denote by
$$
\mathcal{E}_{\varepsilon}(t) := \tfrac{1}{2}  \int \limits_{\Omega} {( v \,u^2 + \tfrac{2 b}{a} h  \Phi(h_x)
+  \tfrac{\varepsilon}{3} v^{-2}  +   \varepsilon v^2_{xx})\,dx}.
$$
Multiplying (\ref{aprr-1}) by $u  $ and integrating over $\Omega$, we have
\begin{multline} \label{nnn-1}
  \tfrac{d}{dt } \mathcal{E}_{\varepsilon}(t) + \delta \int \limits_{\Omega} {    u^2_{xx} \,dx}
+ \varepsilon \eta a \int \limits_{\Omega} {  v^4 v^2_{xxxx} \,dx} + 
  \varepsilon \eta a \int \limits_{\Omega} {   v^2_{xx} \,dx} + c \int \limits_{\Omega} {  v  u^2_x \,dx} +
 \int \limits_{\Omega} { \tfrac{u^2 v  }{g_\varepsilon(h)} \,dx} \\
 =
\int \limits_{\Omega} {u v    \,dx}  - 
4 \varepsilon \eta a \int \limits_{\Omega} { v^3 v_x v_{xxx} v_{xxxx} \,dx} -
b \eta  \int \limits_{\Omega} { (v^4  v_{xxx})_x \kappa \,dx} .
\end{multline}
Note that
$$
\int \limits_{\Omega} {u v    \,dx} \leqslant M^{\frac{1}{2}} \Bigl( \int \limits_{\Omega} {  v u^2 \,dx} \Bigr)^{\frac{1}{2}}.
$$
We will use the following estimates
$$
\|v \|_{\infty}  \leqslant C\,\|  v_{xx} \|_2 + \tfrac{M}{|\Omega|}, \ \|v_x \|_{\infty}  \leqslant C\,\|  v_{xx} \|_2,
\  \|  v_{xxx} \|_2 \leqslant C\,\|  v_{xxxx} \|_2^{\frac{1}{2}} \|  v_{xx} \|_2^{\frac{1}{2}},
$$
and
\begin{equation}\label{inf}
\| v^{-1}\|_{\infty} \leqslant C\, \bigl( \| v^{-1} \|^{\frac{3}{2}}_{2} ( \| v_{xx} \|^2_{2} + \nu )^{\frac{1}{4}}
+ ( \| v_{xx} \|^2_{2} + \nu )^{- \frac{1}{2}}  \bigr) \ \forall \, \nu \geqslant 0.
\end{equation}
For the proof of (\ref{inf}), see \cite[Lemma 3.2, p.807]{CD}.
From \eqref{inf}, we deduce that
\begin{align*}
\int \limits_{\Omega}& { v^3 v_x v_{xxx} v_{xxxx} \,dx}  \leqslant
 \| v^2  v_{xxxx} \|_2 \| v v_x  v_{xxx} \|_2  \\ 
 &\leqslant  
\| v^2  v_{xxxx} \|_2 \|v \|_{\infty} \| v_x\|_{\infty} \|  v_{xxx} \|_2  \nonumber\\
& \leqslant C\, \| v^2  v_{xxxx} \|_2 \|v \|_{\infty} \| v_x\|_{\infty} \|  v_{xxxx} \|_2^{\frac{1}{2}} \|  v_{xx} \|_2^{\frac{1}{2}} \nonumber\\
&\leqslant 
C\, \| v^2  v_{xxxx} \|^{\frac{3}{2}}_2 \| v^{-1} \|_{\infty}
\bigl( \tfrac{M}{|\Omega|} + \|  v_{xx} \|_2 \bigr) \|  v_{xx} \|_2^{\frac{3}{2}} \nonumber\\
& \leqslant 
C\, \| v^2  v_{xxxx} \|^{\frac{3}{2}}_2 \bigl( \| v^{-1} \|^{\frac{3}{2}}_{2} ( \| v_{xx} \|^2_{2} + 1 )^{\frac{1}{4}}
+ 1  \bigr)
\bigl( \tfrac{M}{|\Omega|} + \|  v_{xx} \|_2 \bigr) \|  v_{xx} \|_2^{\frac{3}{2}} ,
\end{align*}
and
\begin{align*}
  \int \limits_{\Omega}& { (v^4  v_{xxx})_x \kappa \,dx} = \int \limits_{\Omega} { v^4  v_{xxxx}  \kappa \,dx} +
  4 \int \limits_{\Omega} {  v^3 v_x  v_{xxx}  \kappa \,dx} \\
 & \leqslant 
\tfrac{\varepsilon}{4}  \int \limits_{\Omega} { v^4  v^2_{xxxx}   \,dx}  +
\tfrac{2}{\varepsilon}  \int \limits_{\Omega} { v^4 \kappa^2 \,dx}  +
\tfrac{C}{\varepsilon} \|v^{-1}\|^4_{\infty}  \int \limits_{\Omega} { v^6 v_x^2 \kappa^2 \,dx} \\
& \leqslant 
\tfrac{\varepsilon}{4}  \int \limits_{\Omega} { v^4  v^2_{xxxx}   \,dx} +
\tfrac{C}{\varepsilon}  \int \limits_{\Omega} {( v^3 + v^3 v^2_{xx} ) \,dx} +
\tfrac{C}{\varepsilon} \|v^{-1}\|^4_{\infty} \int \limits_{\Omega} {( v^6 + v^6 v^2_{xx} ) \,dx} \\
&\leqslant 
\tfrac{\varepsilon}{4}  \int \limits_{\Omega} { v^4  v^2_{xxxx}   \,dx} +
\tfrac{C}{\varepsilon} (1 + \|v_{xx}\|_2^2 ) \bigl[
(\tfrac{M}{|\Omega|})^3 + \|v_{xx}\|_2^3  +  \|v^{-1}\|^4_{\infty} ((\tfrac{M}{|\Omega|})^6 + \|v_{xx}\|_2^6) \bigr] .
\end{align*}
By these estimates, from (\ref{nnn-1}) we have
\begin{multline} \label{nnn-1-000}
  \tfrac{d}{dt } \mathcal{E}_{\varepsilon}(t) + \delta \int \limits_{\Omega} {    u^2_{xx} \,dx}
+ \tfrac{\varepsilon \eta a }{2}\int \limits_{\Omega} {  v^4 v^2_{xxxx} \,dx} + \varepsilon \eta a \int \limits_{\Omega} {   v^2_{xx} \,dx} +
c \int \limits_{\Omega} {  v  u^2_x \,dx} + 
 \int \limits_{\Omega} { \tfrac{u^2 v  }{g_\varepsilon(h)} \,dx} \\
 \leqslant
C\,  \mathcal{E}^{\frac{1}{2}}_{\varepsilon}(t)+
C\, b \eta \varepsilon^{-9} \mathcal{E}^{8}_{\varepsilon}(t)  + C\, a \eta \varepsilon^{-8} \mathcal{E}^{9}_{\varepsilon}(t) ,
\end{multline}
whence by the nonlinear Gronwall inequality, we have
\begin{multline} \label{nnn-1-001}
\mathcal{E}_{\varepsilon}(T) + \delta \iint \limits_{Q_T} {    u^2_{xx} \,dxdt}
+ \tfrac{\varepsilon \eta a}{2} \iint \limits_{Q_T} {  v^4 v^2_{xxx} \,dxdt } 
\\
+\varepsilon \eta a \iint \limits_{Q_T} {   v^2_{xx} \,dx dt} + c \iint \limits_{Q_T} {  v  u^2_x \,dxdt} +  \iint \limits_{Q_T} { \tfrac{u^2 v  }{g(h)} \,dx dt}  \leqslant C(T)
\end{multline}
for all 
$T \leqslant T_{\eta  } : =
{[16 C {\max\{1,a,b\}}\mathcal{E}^{8}_{\varepsilon}(0) \eta \varepsilon^{-8}]^{-1}} \to +\infty$
as $\eta \to 0$. 
In particular, from (\ref{nnn-1-001}), due to (\ref{inf}), we arrive at
\begin{equation}\label{r-lb}
\| v^{-1}\|_{\infty} \leqslant C \, \varepsilon^{- 1},
\end{equation}
whence $\mathop {\inf} \limits_{\Omega} v \geqslant C \, \varepsilon $.

Next, we consider the plug flow case $g(h) =v$ only, and the laminar flow case can be addressed
in a similar manner. Multiplying (\ref{aprr-1}) by $ \frac{v_x}{v}  $ and integrating over $\Omega$, we obtain that
\begin{align} \label{n-4-001}
  \tfrac{d}{dt }& \mathcal{S}_{1,\varepsilon}(u,v) +
\tfrac{2bc}{a} \int \limits_{\Omega} { h f^3(h_x)h^2_{xx} \,dx}  +
\int \limits_{\Omega} {  u^2   \,dx} + \tfrac{2bc}{a} \int \limits_{\Omega} { h^{-1} f(h_x) \,dx} \\ 
& \quad + \delta \int \limits_{\Omega} {    u^2_{xx} \,dx}
+ \varepsilon a  \int \limits_{\Omega} { v^2_{xxx } \,dx} + \tfrac{ \varepsilon a}{3} \int \limits_{\Omega} {\tfrac{ v^2_{x }}{v^{4}} \,dx} \nonumber\\
&= \tfrac{2bc}{a} \int \limits_{\Omega} { h^{-1}\Phi(h_x) \,dx} + \int \limits_{\Omega} {u v \,dx}
- \delta  \int \limits_{\Omega} { \bigl( \tfrac{v_x}{v} \bigr)_{xx}u_{xx} \,dx} + \eta \, c \int \limits_{\Omega} { v u_{x} (v^3 v_{xxx})_x \,dx} \nonumber\\ 
& \quad +
 \eta \tfrac{c^2 }{a} \int \limits_{\Omega} {  \bigl( \tfrac{v_x}{v} \bigr)_{x} v (v^3 v_{xxx})_x  \,dx}
 + \eta \tfrac{ c }{a} \int \limits_{\Omega} {  \tfrac{ (v^4 v_{xxx})_x}{g(h)} \,dx} ,\nonumber
\end{align}
where
\begin{multline*}
\mathcal{S}_{1,\varepsilon}(u,v) := \tfrac{1}{2} \int \limits_{\Omega}   \bigl[   v (u + \tfrac{c}{a} \tfrac{v_x}{v} )^2 + \tfrac{4b}{a}h  \Phi(h_x) 
+ \tfrac{2c}{a^2} (v - \log (v) ) +    
2 \varepsilon v^{-2} + \varepsilon v^2_{xx } \bigr]\,dx .
\end{multline*}
Using the equality
\begin{equation}\label{intr-tr}
\left( \frac{v_x}{v} \right)_{xx} = \frac{v_{xxx}}{v} - 3 \frac{v_x v_{xx}}{v^2} + 2\frac{v^3_x}{v^3},
\end{equation}
and the following estimates
$$
\| v_x \|_{6} \leqslant C \| v_{xx } \|^{\frac{1}{3}}_{ 2}  \| v_{x} \|^{\frac{2}{3}}_{ 2}, \
\| v_{xx} \|_{\infty} \leqslant C \| v_{xxx} \|_{ 2} ,
$$
we find that
\begin{align*}
\delta  \int \limits_{\Omega} & { \bigl( \tfrac{v_x}{v} \bigr)_{xx}u_{xx} \,dx} \leqslant
\delta^{\frac{1}{2}} \|\delta^{\frac{1}{2}} u_{xx}\|_2
\bigl( \| \tfrac{v_{xxx}}{v} \|_2 + 3 \| \tfrac{v_x v_{xx}}{v^2} \|_2 + 2 \| \tfrac{v_{x}}{v} \|^3_6 \bigr) \\
&\leqslant  \delta^{\frac{1}{2}} \|\delta^{\frac{1}{2}} u_{xx}\|_2
\bigl( \| v^{-1}\|_{\infty}  \|  v_{xxx} \|_2 + 3\| v^{-1}\|^2_{\infty} \|  v_x  \|_2
\| v_{xx} \|_{\infty} + 2 \| v^{-1}\|^3_{\infty} \|  v_{x}\|^3_6 \bigr) \\
& \leqslant 
C\,\delta^{\frac{1}{2}} \|\delta^{\frac{1}{2}} u_{xx}\|_2
\bigl( \| v^{-1}\|_{\infty}  \|  v_{xxx} \|_2 +  \| v^{-1}\|^2_{\infty}\|  v_x  \|_2 \|  v_{xxx} \|_2  \\
& \quad + 
 \| v^{-1}\|^3_{\infty}\|  v_x  \|^2_2 \|  v_{xxx} \|_2 \bigr) \leqslant
 C\,\delta^{\frac{1}{2}} \varepsilon^{- 4} \|\delta^{\frac{1}{2}} u_{xx}\|_2 \|  v_{xxx} \|_2 \\
& \leqslant 
C\,\delta^{\frac{1}{2}} \varepsilon^{- 4} \|\delta^{\frac{1}{2}} u_{xx}\|_2 \|  v_{xxxx} \|_2 \leqslant C\,\delta^{\frac{1}{2}} \varepsilon^{- \frac{13}{2}} \eta^{-\frac{1}{2}} \|\delta^{\frac{1}{2}} u_{xx}\|_2
 \| (  \varepsilon\eta)^{\frac{1}{2}}  v^2 v_{xxxx} \|_2  .
\end{align*}
 As a result, due to (\ref{nnn-1-001}), we find that
\begin{equation}\label{dd-1}
\delta  \iint \limits_{Q_T} { \bigl( \tfrac{v_x}{v} \bigr)_{xx}u_{xx} \,dx dt } \leqslant
C\,\delta^{\frac{1}{2}} \eta^{-\frac{1}{2}} \varepsilon^{- \frac{13}{2}} .
\end{equation}
Using (\ref{intr-tr}) and the estimates
$$
\int \limits_{\Omega} {v  v^2_x v^2_{xx} \,dx} \leqslant \tfrac{1}{3}
 \Bigl( \int \limits_{\Omega} { v^3 v^2_{xxx}  \,dx} \Bigr)^{\frac{1}{2}}
  \Bigl( \int \limits_{\Omega} {  \tfrac{v^6_{x}}{v}  \,dx} \Bigr)^{\frac{1}{2}}, \ \
\| v_x \|_{6} \leqslant C \| v_{xxx} \|^{\frac{1}{6}}_{ 2}  \| v_{x} \|^{\frac{5}{6}}_{ 2},
$$
we find that
\begin{align*}
    \int \limits_{\Omega} & {  \bigl( \tfrac{v_x}{v} \bigr)_{x} v (v^3 v_{xxx})_x  \,dx} = -
  \int \limits_{\Omega} {  \bigl( \tfrac{v_x}{v} \bigr)_{xx}  v^4 v_{xxx}  \,dx} -
  \int \limits_{\Omega} {  \bigl( \tfrac{v_x}{v} \bigr)_{x} v^3 v_x v_{xxx}  \,dx} \\
  & =  -   \int \limits_{\Omega} {    v^3 v^2_{xxx}  \,dx} + 2  \int \limits_{\Omega} {v^2 v_x v_{xx} v_{xxx}  \,dx} -
 \int \limits_{\Omega} { v v_x^3 v_{xxx}  \,dx} \\
 & \leqslant 
 -   \int \limits_{\Omega} {    v^3 v^2_{xxx}  \,dx}  + \tfrac{2}{\sqrt{3}}
 \Bigl( \int \limits_{\Omega} { v^3 v^2_{xxx}  \,dx} \Bigr)^{\frac{3}{4}}
  \Bigl( \int \limits_{\Omega} {  \tfrac{v^6_{x}}{v}  \,dx} \Bigr)^{\frac{1}{4}} \\
  &\quad + \Bigl( \int \limits_{\Omega} { v^3 v^2_{xxx}  \,dx} \Bigr)^{\frac{1}{2}}
  \Bigl( \int \limits_{\Omega} {  \tfrac{v^6_{x}}{v}  \,dx} \Bigr)^{\frac{1}{2}} \leqslant
 - \tfrac{1}{2}  \int \limits_{\Omega} {    v^3 v^2_{xxx}  \,dx}  + C \| v^{-1}\|_{\infty} \| v_x \|^6_{6} \\
 &\leqslant - \tfrac{1}{2}  \int \limits_{\Omega} {    v^3 v^2_{xxx}  \,dx}  + C \| v^{-1}\|_{\infty} \| v_x \|^5_{2}
  \| v_{xxx} \|_{2} \\
  & \leqslant 
   - \tfrac{1}{2}  \int \limits_{\Omega} {    v^3 v^2_{xxx}  \,dx}  +
  C \| v^{-1}\|^{\frac{5}{2}}_{\infty} \| v_x \|^5_{2} \|    v^{\frac{3}{2}} v_{xxx} \|_2 \\
  & \leqslant  
   - \tfrac{1}{4}  \int \limits_{\Omega} {    v^3 v^2_{xxx}  \,dx}  +
  C \| v^{-1}\|^{5}_{\infty} \| v_{xx} \|^{10}_{2}  \leqslant
 - \tfrac{1}{4}  \int \limits_{\Omega} {    v^3 v^2_{xxx}  \,dx}  +
  C \, \varepsilon^{-10} .
 \end{align*}
As a result, due to (\ref{nnn-1-001}), we arrive at
\begin{equation}\label{dd-2}
\eta \iint \limits_{Q_T} { \bigl( \tfrac{v_x}{v} \bigr)_{x} v (v^3 v_{xxx})_x  \,dx dt } \leqslant
 - \tfrac{\eta}{4}  \iint \limits_{Q_T} {    v^3 v^2_{xxx}  \,dx dt} + C\,\eta \, \varepsilon^{- 10}  .
\end{equation}
Using the estimate
\begin{align*}
\int \limits_{\Omega} { v (v^3 v_{xxx})^2_x \,dx} & \leqslant C \int \limits_{\Omega} {  v^7 v^2_{xxxx}  \,dx}  +
 C \int \limits_{\Omega} {  v^5 v_x^2 v^2_{xxx}  \,dx} \\ 
 & \leqslant  
 C \|v\|^3_{\infty} \int \limits_{\Omega} {  v^4 v^2_{xxxx}  \,dx}  +
 C \|v\|^5_{\infty} \|v_x \|^2_{\infty} \int \limits_{\Omega} { v^2_{xxx}  \,dx} \\
 & \leqslant 
 C \bigl( \|v\|^3_{\infty} + \|v\|^5_{\infty} \|v_x \|^2_{\infty} \|v^{-1}\|^4_{\infty} \bigr)
  \int \limits_{\Omega} {  v^4 v^2_{xxxx}  \,dx} \\
  & \leqslant C\, \varepsilon^{-\frac{15}{2}} \int \limits_{\Omega} {  v^4 v^2_{xxxx}  \,dx},
\end{align*}
we deduce that
\begin{align*}
\eta \, c \iint \limits_{Q_T} { v u_{x} (v^3 v_{xxx})_x \,dx dt} & \leqslant 
\eta \, c \Bigl( \iint \limits_{Q_T} { v u^2_{x} \,dx dt} \Bigr)^{\frac{1}{2}}
\Bigl( \iint \limits_{Q_T} { v   (v^3 v_{xxx})^2_x \,dx dt} \Bigr)^{\frac{1}{2}} \\ 
&\leqslant   
\eta \, c \Bigl( \iint \limits_{Q_T} { v u^2_{x} \,dx dt} \Bigr)^{\frac{1}{2}}
\Bigl( C\, \varepsilon^{-\frac{15}{2}}  \iint \limits_{Q_T} { v^4 v^2_{xxxx}  \,dx dt} \Bigr)^{\frac{1}{2}} ,
\end{align*}
whence, due to (\ref{nnn-1-001}), we have
\begin{equation}\label{dd-3}
\eta \, c \iint \limits_{Q_T} { v u_{x} (v^3 v_{xxx})_x \,dx dt} \leqslant C\, \eta^{\frac{1}{2}} \varepsilon^{- \frac{17}{4}}.
\end{equation}
Using the estimate
\begin{align*}
\int \limits_{\Omega} {  \tfrac{ (v^4 v_{xxx})_x}{v} \,dx} & = \int \limits_{\Omega} {v^3 v_{xxxx}  \,dx} +
 4 \int \limits_{\Omega} {v^2 v_x v_{xxx}  \,dx} \\
 & \leqslant 
\| v^2 v_{xxxx}\|_2 ( \| v\|_2 + C\, \|v^{-1}\|^2_{\infty} \| v \|^2_{\infty} \|v_x\|_2 ) \\
& \leqslant 
C\, \varepsilon^{-\frac{7}{2}} \| v^2 v_{xxxx}\|_2 = C\, \eta^{-\frac{1}{2}} \varepsilon^{-4} \|(\varepsilon \eta )^{\frac{1}{2}} v^2 v_{xxxx}\|_2,
\end{align*}
we get
\begin{equation}\label{dd-4}
   \eta \tfrac{ c }{a} \iint \limits_{Q_T} {  \tfrac{ (v^4 v_{xxx})_x}{g(h)} \,dx dt} \leqslant
 C\, \eta^{ \frac{1}{2}} \varepsilon^{-4} .
\end{equation}
Integrating (\ref{n-4-001}) in time, taking into account (\ref{nnn-1-001}) and
(\ref{dd-1})--(\ref{dd-4}), we obtain
\begin{multline} \label{n-4-001-0}
\mathcal{S}_{1,\varepsilon}(u,v) +
\tfrac{2bc}{a} \iint \limits_{Q_T} { h f^3(h_x)h^2_{xx} \,dx dt}  +
\iint \limits_{Q_T} {  u^2   \,dx dt}  
+\tfrac{2bc}{a} \iint \limits_{Q_T} { h^{-1} f(h_x) \,dx dt} \\
+ \delta \iint \limits_{Q_T} {    u^2_{xx} \,dx dt }
+ \varepsilon a  \iint \limits_{Q_T}{ v^2_{xxx } \,dx dt } 
+ \tfrac{2\varepsilon a}{3} \iint \limits_{Q_T} {\tfrac{ v^2_{x }}{v^{4}} \,dx dt} + \tfrac{\eta c^2}{4a}  \iint \limits_{Q_T} {    v^3 v^2_{xxx}  \,dx dt} \\
\leqslant
 \mathcal{S}_{1,\varepsilon}(u_{\varepsilon\eta,0},v_{\varepsilon\eta,0})  
+ C(T) + C\,\delta^{\frac{1}{2}} \eta^{-\frac{1}{2}} \varepsilon^{- \frac{13}{2}}
+ C\,\eta \, \varepsilon^{- 10} + C\, \eta^{\frac{1}{2}} \varepsilon^{- \frac{17}{4}}
\end{multline}
for all $T \leqslant T_{\eta}$.
\end{proof}

\subsection{Compactness and limit processes}

\textbf{Passage to the limit $\delta \to 0$.}
Denote the corresponding solution to the approximate problem (\ref{aprr-1})--(\ref{aprr-4}) by
$(v_{\delta\eta\varepsilon}, u_{\delta\eta\varepsilon})$. Let $T \leqslant T_{\eta}$.
We study the compactness properties of the sequence
$(v_{\delta\eta\varepsilon}, u_{\delta\eta\varepsilon})$ by using the estimates derived in Lemma~\ref{apr-est-eda}.
From (\ref{apr-nn-1}) and (\ref{apr-nn-2}), we have that $\{ v_{\delta\eta\varepsilon} \}_{\delta > 0}$ is bounded in $L^{\infty}(0,T; H^2(\Omega))$ and
$\{ v_{\delta\eta\varepsilon,t} \}_{\delta > 0}$ is bounded in $L^2(Q_T)$. Therefore, using \cite[Lemma 7.19, p.~175]{vazquez2007porous},
we conclude that $\{ v_{\delta\eta\varepsilon} \}_{\delta > 0}$ is bounded in $C_{x,t}^{\frac{3}{2}, \frac{3}{8}}(\bar{Q}_T)$.
By the Arzela-Ascoli theorem, after possibly extracting a subsequence, we obtain that
$$
v_{\delta\eta\varepsilon} \mathop {\to} \limits_{\delta \to 0} v_{\eta\varepsilon} \text{ uniformly in }  \bar{Q}_T ,
$$
$$
v_{\delta\eta\varepsilon, t}  \mathop {\to} \limits_{\delta \to 0}  v_{\eta\varepsilon, t} \text{ weakly in } L^2(Q_T),
$$
whence
$$
v^{-1}_{\delta\eta\varepsilon} \mathop {\to} \limits_{\delta \to 0} v^{-1}_{\eta\varepsilon} \text{ uniformly in }  \bar{Q}_T .
$$
Also, by (\ref{apr-nn-2}) we obtain that
$$
v_{\delta\eta\varepsilon} \mathop {\to} \limits_{\delta \to 0} v_{\eta\varepsilon} \text{ weakly in }  L^2(0,T; H^{4}(\Omega)),
$$
$$
v_{\delta\eta\varepsilon} \mathop {\to} \limits_{\delta \to 0} v_{\eta\varepsilon} \text{ strongly in }  L^2(0,T; H^{3}(\Omega)).
$$
Next, we turn to compactness properties of $\{ u_{\delta\eta\varepsilon} \}_{\delta > 0}$.
By (\ref{r-rr-apr})--(\ref{r-rr-apr-4}) and the boundedness $v_{\delta\eta\varepsilon} $
away from zero,  we have that $\{ u_{\delta\eta\varepsilon } \}_{\delta > 0}$ is bounded in
$L^{\infty}(0,T; L^{2}(\Omega)) \cap L^2(0,T; H^{1}(\Omega))$. Moreover, $\{ ( v_{\delta\eta\varepsilon} u_{\delta\eta\varepsilon})_t \}_{\delta > 0}$  and
$\{ u_{\delta\eta\varepsilon,t} \}_{\delta > 0}$ are bounded in $L^2(0,T; H^{-2}(\Omega))$. Therefore, we have
$$
u_{\delta\eta\varepsilon} \mathop {\to} \limits_{\delta \to 0} u_{\eta\varepsilon} \text{ strongly in } L^2(Q_T),
$$
$$
u_{\delta\eta\varepsilon,x} \mathop {\to} \limits_{\delta \to 0} u_{\eta\varepsilon,x} \text{ weakly in } L^2(Q_T),
$$
$$
u_{\delta\eta\varepsilon, t}  \mathop {\to} \limits_{\delta \to 0}  u_{\eta\varepsilon, t} \ *-\text{ weakly in } L^2(0,T; H^{-2}(\Omega)),
$$
$$
v_{\delta\eta\varepsilon} u_{\delta\eta\varepsilon} \mathop {\to} \limits_{\delta \to 0}
v_{ \eta\varepsilon} u_{ \eta\varepsilon} \text{ strongly in } L^2(Q_T),
$$
$$
( v_{\delta\eta\varepsilon} u_{\delta\eta\varepsilon})_t \mathop {\to} \limits_{\delta \to 0}  ( v_{ \eta\varepsilon} u_{ \eta\varepsilon})_t \ *-\text{ weakly in } L^2(0,T; H^{-2}(\Omega)).
$$
Moreover, from (\ref{apr-nn-2}) we obtain
$$
\delta \Bigl | \iint \limits_{Q_T} {u_{xx} \psi_{xx}  \,dx dt} \Bigr | \leqslant \delta^{\frac{1}{2}}
\| \delta^{\frac{1}{2}} u_{xx}  \|_{L^2(Q_T)} \| \psi   \|_{L^2(0,T;H^2(\Omega))} \leqslant C \, \delta^{\frac{1}{2}}.
$$
Based on the convergence results obtained, we can take the limit as $\delta \to 0$ in (\ref{ident-1}) and (\ref{ident-2}).

\textbf{Passage to the limit $\eta \to 0$.}
Since $T_{\eta} \to +\infty $ as $\eta \to 0$, we can extend the results to any $T > 0$.
Now, we consider the compactness properties of the sequence $(v_{\eta\varepsilon}, u_{\eta\varepsilon})$
by using the estimates derived in Lemma~\ref{apr-est-eda}.
Due to (\ref{apr-nn-1}) and (\ref{apr-nn-2}), we have that $\{ v_{\eta\varepsilon} \}_{\eta > 0}$ is bounded in $L^{\infty}(0,T; H^2(\Omega))$ and $\{ v_{\eta\varepsilon,t} \}_{\eta > 0}$ is bounded in $L^2(0,T; H^{-1}(\Omega))$. Therefore, similar to
\cite[Lemma 2.1, p.~183]{B8}, we arrive at the conclusion that $\{ v_{\eta\varepsilon} \}_{\eta > 0}$ is bounded in
$C_{x,t}^{\frac{3}{2}, \frac{1}{4}}(\bar{Q}_T)$. By the Arzela-Ascoli theorem, after possibly extracting a subsequence,
we obtain that
$$
v_{\eta\varepsilon} \mathop {\to} \limits_{\eta \to 0} v_{\varepsilon} \text{ uniformly in }  \bar{Q}_T ,
$$
$$
v_{ \eta\varepsilon, t}  \mathop {\to} \limits_{\eta \to 0}  v_{ \varepsilon, t} \  *-\text{ weakly in } L^2(0,T; H^{-1}(\Omega)),
$$
whence
$$
v^{-1}_{\eta\varepsilon} \mathop {\to} \limits_{\eta \to 0} v^{-1}_{\varepsilon} \text{ uniformly in }  \bar{Q}_T .
$$
Also, by (\ref{apr-nn-2}) we obtain that
$$
v_{\eta\varepsilon} \mathop {\to} \limits_{\eta \to 0} v_{\varepsilon} \text{ weakly in }  L^2(0,T; H^{3}(\Omega)),
$$
$$
v_{\eta\varepsilon} \mathop {\to} \limits_{\eta \to 0} v_{\varepsilon} \text{ strongly in }  L^2(0,T; H^{2}(\Omega)).
$$
Next, we turn to compactness properties of $\{ u_{\eta\varepsilon} \}_{\eta > 0}$.
Using (\ref{r-rr-apr})--(\ref{r-rr-apr-4}) and the boundedness $v_{ \eta\varepsilon} $
away from zero,  we have that $\{ u_{ \eta\varepsilon } \}_{\eta > 0}$ is bounded in
$L^{\infty}(0,T; L^{2}(\Omega)) \cap L^2(0,T; H^{1}(\Omega))$. Moreover, 
$\{  v_{ \eta\varepsilon }  u^2_{ \eta\varepsilon } \}_{\eta > 0}$ is bounded in $L^p(Q_T)$ for $p \in (1,3)$, and $\{ ( v_{ \eta\varepsilon} u_{ \eta\varepsilon})_t \}_{\eta > 0}$  and $\{ u_{ \eta\varepsilon,t} \}_{\varepsilon > 0}$ are bounded in $L^2(0,T; H^{-2}(\Omega))$.
Therefore, we have
$$
u_{ \eta\varepsilon} \mathop {\to} \limits_{\eta \to 0} u_{ \varepsilon} \text{ strongly in } L^2(Q_T),
$$
$$
u_{ \eta\varepsilon,x} \mathop {\to} \limits_{\eta \to 0} u_{ \varepsilon,x} \text{ weakly in } L^2(Q_T),
$$
$$
u_{ \eta\varepsilon, t}  \mathop {\to} \limits_{\eta \to 0}  u_{ \varepsilon, t} \ *-\text{ weakly in } L^2(0,T; H^{-2}(\Omega)),
$$
$$
v_{ \eta\varepsilon} u_{ \eta\varepsilon} \mathop {\to} \limits_{\eta \to 0}
v_{  \varepsilon} u_{  \varepsilon} \text{ strongly in } L^2(Q_T),
$$
$$
v_{ \eta\varepsilon} u^2_{ \eta\varepsilon} \mathop {\to} \limits_{\eta \to 0}
v_{  \varepsilon} u^2_{\varepsilon} \text{ strongly in } L^2(Q_T),
$$
$$
( v_{ \eta\varepsilon} u_{ \eta\varepsilon})_t \mathop {\to} \limits_{\eta \to 0}  ( v_{ \varepsilon} u_{ \varepsilon})_t \ *-\text{ weakly in } L^2(0,T; H^{-2}(\Omega)).
$$
Moreover, by (\ref{r-rr-apr}) and (\ref{apr-nn-2}), we arrive at
\begin{align*}
\eta \iint \limits_{Q_T} {  u\, v^4 v_{xxx}  \psi_x \,dx dt} &\leqslant \eta
\int \limits_0^T { \| \sqrt{v} u \|_{2} \| v \|^{\frac{7}{2}}_{\infty} \| v_{xxx}\|_2 \| \psi_x \|_{\infty} \,dt }   \\
& \leqslant  C\,\eta \varepsilon^{- \frac{1}{2}} \| \varepsilon^{\frac{1}{2}}  v_{xxx}\|_{L^2(Q_T)}  \| \psi   \|_{L^2(0,T;H^2(\Omega))}
 \leqslant C\,\eta \varepsilon^{- \frac{1}{2}} ,
\end{align*}
\begin{align*}
\eta \iint \limits_{Q_T} {  v^4 v_{xxx}  \phi_x \,dx dt} &\leqslant \eta
\varepsilon^{-\frac{1}{2}} \| v \|_{L^\infty(Q_T)} \| \varepsilon^{\frac{1}{2}}  v_{xxx}\|_{L^2(Q_T)} \| \psi_x \|_{L^2(Q_T)}   \\
& \leqslant  
 C\,\eta \varepsilon^{- \frac{1}{2}}   \| \psi   \|_{L^2(0,T;H^1(\Omega))}  \leqslant C\,\eta \varepsilon^{- \frac{1}{2}} .
\end{align*}
The obtained convergence results enable us to take the limit as $\eta \to 0$ in (\ref{ident-1}) and (\ref{ident-2}) with $\delta = 0$.

\textbf{Passage to the limit $\varepsilon \to 0$.}
Next,  we study the compactness properties of the sequence $(v_{\varepsilon}, u_{\varepsilon})$
by using the estimates derived in Lemma~\ref{apr-est-eda}. Taking into account
$$
(\sqrt{v})_t = - a  ( \sqrt{v} u)_x  +  \tfrac{a}{2} \sqrt{v} u_x ,
$$
by (\ref{r-rr-apr}) and (\ref{r-rr-apr-3}), we deduce that $\{  (\sqrt{v_{\varepsilon }})_t \}_{\varepsilon > 0}$ is uniformly
bounded in $L^{2}(0,T; H^{-1}(\Omega))$, and $ \{ \sqrt{v_{\varepsilon}} \}_{\varepsilon > 0}$ is uniformly
bounded in $L^{\infty}(0,T; H^1(\Omega))$. Therefore, based on the lemma of compactness embedding from  \cite[Corollary 4, p.~85]{simon1986compact}, we obtain that
\begin{equation}\label{boun-1}
\sqrt{v_{ \varepsilon} }\mathop {\to} \limits_{\varepsilon \to 0} \sqrt{v}  \text{ uniformly in }  \bar{Q}_T,
\end{equation}
and it follows that
\begin{equation}\label{boun-1-00}
 v_{ \varepsilon}  \mathop {\to} \limits_{\varepsilon \to 0} v  \text{ uniformly in }  \bar{Q}_T.
\end{equation}
Also, by (\ref{r-rr-apr}) and (\ref{boun-1}),  $\{ u_{\varepsilon} v_{\varepsilon} \}_{\varepsilon > 0}$ is uniformly
bounded in $L^{2}(Q_T)$. Therefore, we find that $\{  v_{\varepsilon, t} \}_{\varepsilon > 0}$ is uniformly
bounded in $L^{2}(0,T; H^{-1}(\Omega))$, which implies that
$$
v_{\varepsilon, t}  \mathop {\to} \limits_{\varepsilon \to 0}  v_{t} \  *-\text{ weakly in } L^2(0,T; H^{-1}(\Omega)).
$$
From the boundedness of  $\{ h_{\varepsilon} \Phi(h_{\varepsilon,x}) \}_{\varepsilon > 0}$
in $L^{\infty}(0,T; L^1(\Omega))$, we deduce that
\begin{equation}\label{bound-2}
\{ v_\varepsilon \}_{\varepsilon > 0} \text{ is uniformly bounded in }
L^{\infty}(0,T; W_1^1(\Omega)).
\end{equation}
Therefore, from (\ref{boun-1-00}), it follows that
$$
v_{\varepsilon }, \, h_{\varepsilon }  \mathop {\to} \limits_{\varepsilon \to 0}  v, \, h  \  *-\text{ weakly in } L^{\infty}(0,T;  W_1^1(\Omega)),
$$
and the set $  \{ |h_x(.,t)| = \infty \}  $ has Lebesgue measure zero for any $t > 0$.
By the boundedness of $\{ \log (v_{\varepsilon})\}_{\varepsilon > 0}$ in $L^{\infty}(0,T; L^1(\Omega))$ and
(\ref{boun-1-00}), the set $  \{ v(.,t) = 0\}  $ has Lebesgue measure zero
for any $t > 0$, and it follows that
$$
p(v_{\varepsilon})  \mathop {\to} \limits_{\varepsilon \to 0} p(v)
$$
holds for almost all $x$ and for any $t> 0$, where $p(z) = \frac{1}{2} z^{-2}$.

In the case of the laminar flow model with $g(h) = \frac{I(h)}{v}$,
from (\ref{r-rr-apr}) and (\ref{r-rr-apr-2}), due to (\ref{boun-1-00}), we obtain that
$\sqrt{v} \in L^\infty(0,T;H^1(\Omega))$ and $\int \limits_{\Omega} {G(h)\,dx} < + \infty $.
Next, we show that $v(x,t) > 0$ in $Q_T$ by contradiction. Assume that there exists a point $x_0 \in \bar{\Omega}$
such that $v(x_0,t) = 0$, then $v(x,t) \leqslant C|x -x_0|$. Taking into account  $ G(h) \sim \frac{C}{h -1}$ as $h \to 1$ (see Remark \ref{remark_G}),
we find that
$$
+ \infty > \int \limits_{\Omega} {G(h)\,dx}  \geqslant C \int \limits_{\Omega} {  \tfrac{dx}{|x-x_0|}} = + \infty.
$$
This contradiction proves that $v  > 0$.

By (\ref{boun-1-00}),  $\varepsilon  p(v_{\varepsilon}) \to 0$  uniformly on $\{ v > \nu \}$ as $\varepsilon \to 0$  for any
$\nu > 0$. Then
$$
\iint \limits_{\{ v > \nu \} } {\varepsilon p(v_\varepsilon) \psi_x \, dx dt} \to 0  \text{ as } \varepsilon \to 0
$$
for any $\nu > 0$. By  (\ref{apr-nn-1}) and (\ref{apr-nn-2}),  $\{ \varepsilon^{1/2} v^{-1}_\varepsilon \}_{\varepsilon > 0} $ is uniformly bounded in $L^\infty(0,T;L^2(\Omega)) \cap  L^2(0,T;H^1(\Omega)$.
Using \cite[Proposition 3.3, p.10]{dibenedetto1993degenerate},
we obtain that
$\{ \varepsilon^{1/2} v^{-1}_\varepsilon \} $ is uniformly bounded in
$L^4(0,T;L^\infty (\Omega)) $. From here, if $\varepsilon$ is sufficiently small, depending on $\nu$,
we deduce that
\begin{align*}
\Bigl |  \iint \limits_{\{ v \leqslant \nu \} } {\varepsilon p(v_\varepsilon) \psi_x \, dx dt}   \Bigr| & =
\Bigl | \tfrac{1}{2} \iint \limits_{\{ v \leqslant \nu \} } { (\varepsilon^{1/2}  v^{-1}_\varepsilon )^2 \psi_x \, dx dt}   \Bigr|      \\
& \leqslant \tfrac{1}{2}    \| \varepsilon^{1/2} v^{-1}_\varepsilon \|^2_{L^4(0,T;L^\infty (\Omega))}
( \mathop {\sup} \limits_{t \in [0,T] } | \{  v(.,t) \leqslant \nu\} | )^{1/2} \| \psi_x \|_{L^2(Q_T)} \\
& \leqslant 
C \, ( \mathop {\sup} \limits_{t \in [0,T] } | \{ v(.,t) \leqslant \nu \} | )^{1/2} .
\end{align*}
As a result, since $\nu > 0$ is arbitrary and $ | \{ v(.,t) = 0\} | = 0  $ for any $t > 0$, we take $\varepsilon \to 0$ and arrive at
$$
\iint \limits_{Q_T} {\varepsilon p(v_\varepsilon) \psi_x \, dx dt} \to 0  \text{ as } \varepsilon \to 0.
$$

Let us denote by $F(z) = \int \limits_{-\infty}^z {f^{\frac{3}{2}}(s)\, ds} $ and $F'(\pm \infty) = 0$.
By (\ref{r-rr-apr-3}), we find that $\{ F(h_{\varepsilon,x}) \}_{\varepsilon > 0}$ is uniformly bounded in $L^{2}(0,T; H^1(\Omega))$. Therefore, we have
$$
F(h_{\varepsilon,x})  \mathop {\to} \limits_{\varepsilon \to 0}  F(h_x)  \text{ weakly in } L^{2}(0,T; H^1(\Omega)),
$$
$$
F(h_{\varepsilon,x})  \mathop {\to} \limits_{\varepsilon \to 0}  F(h_x)  \text{ strongly in } L^{2}(Q_T)
\text{ and a.\,e. in } Q_T,
$$
and it follows that
\begin{equation}\label{rtt-1}
h_{\varepsilon,xx}   \mathop {\to} \limits_{\varepsilon \to 0}   h_{xx}  \text{ weakly in } L^{2}(\{|h_x| < K \}) ,
\end{equation}
\begin{equation}\label{rtt-2}
h_{\varepsilon,x}   \mathop {\to} \limits_{\varepsilon \to 0}   h_{x}  \text{ strongly in } L^{2}(\{|h_x| < K \})
\end{equation}
for any $K > 0$. In particular, in view of $  | \{ |h_x(.,t)| = \infty \}| =0 $ for any $t > 0$, we obtain that
\begin{equation}\label{rtt-3}
 h_{\varepsilon,x}   \mathop {\to} \limits_{\varepsilon \to 0}   h_x \text{ strongly in }L^2(Q_T)  \text{ and a.\,e. in } Q_T.
\end{equation}
Using the following estimates
$$
| \kappa_{\varepsilon} | = \bigl | h^{-1}_{\varepsilon} f(h_{\varepsilon,x}) -
\sqrt{ \tfrac{f^3(h_{\varepsilon,x})}{h_{\varepsilon} }} \sqrt{h_{\varepsilon} f^3(h_{\varepsilon,x})  }  h_{\varepsilon,xx} \bigr |
\leqslant 1 + \sqrt{h_{\varepsilon} f^3(h_{\varepsilon,x})  } |h_{\varepsilon,xx}| ,
$$
\begin{align*}
| \kappa_{\varepsilon} v_{\varepsilon,x} | &= 2 \bigl | h_{\varepsilon,x} f(h_{\varepsilon,x}) -
h_{\varepsilon,x} \sqrt{ h_{\varepsilon} f^3(h_{\varepsilon,x})} \sqrt{h_{\varepsilon} f^3(h_{\varepsilon,x})  }  h_{\varepsilon,xx} \bigr | \\
& \leqslant   
2 + 2\| \sqrt{h_{\varepsilon}} \|_{\infty} \sqrt{h_{\varepsilon} f^3(h_{\varepsilon,x})  } |h_{\varepsilon,xx}|  ,
\end{align*}
due to (\ref{boun-1-00}) and (\ref{r-rr-apr-3}), we have that $\{ \kappa_{\varepsilon} \}_{\varepsilon > 0}$ and
$\{ \kappa_{\varepsilon} v_{\varepsilon,x} \}_{\varepsilon > 0}$ are  uniformly bounded in $L^{2}(Q_T)$.
Therefore, from (\ref{rtt-1})-- (\ref{rtt-3}) and (\ref{boun-1-00}), for any $K > 0$, we have
$$
 \iint \limits_{\{ |h_x| < K\} } {  f^3(h_{\varepsilon,x}) h_{\varepsilon,xx}  v_\varepsilon \psi_x  \,dx dt}
\mathop {\to} \limits_{\varepsilon \to 0} \iint \limits_{\{ |h_x| < K\} } {  f^3(h_{ x}) h_{ xx}  v \, \psi_x  \,dx dt},
$$
$$
  \iint \limits_{\{ |h_x| <  K\} } { h_{\varepsilon,x}   h_{\varepsilon} f^3(h_{\varepsilon,x}) h_{\varepsilon,xx} \psi  \,dx dt}
\mathop {\to} \limits_{\varepsilon \to 0} \iint \limits_{\{ |h_x| <  K\} } { h_{x}   h  f^3(h_{ x}) h_{ xx} \psi  \,dx dt}  ,
$$
$$
 \iint \limits_{Q_T} {  h^{-1}_{\varepsilon} f(h_{\varepsilon,x})  v_\varepsilon \psi_x  \,dx dt}
\mathop {\to} \limits_{\varepsilon \to 0} \iint \limits_{Q_T } { h^{-1}  f(h_{ x})  v \, \psi_x  \,dx dt},
$$
$$
  \iint \limits_{Q_T } { h_{\varepsilon,x} f(h_{\varepsilon,x})  \psi  \,dx dt}
\mathop {\to} \limits_{\varepsilon \to 0} \iint \limits_{Q_T } { h_{ x} f(h_{ x})   \psi  \,dx dt}  .
$$
On the other hand, if $\varepsilon$ is sufficiently small (depending on $K$), then by (\ref{r-rr-apr-3}) and using the form $f(z) = (1+z^2)^{-1/2}$ and the inequality $z^2f^3(z)\leqslant f(z)$, we obtain
\begin{align*}
\Bigl | \iint \limits_{\{ |h_x| \geqslant K\} } {  f^3(h_{\varepsilon,x}) h_{\varepsilon,xx}  v_\varepsilon \psi_x  \,dx dt}
\Bigr | 
& \leqslant    
\tfrac{C}{(1+K^2)^{3/4}}\Bigl(  \iint \limits_{Q_T} {  h_{\varepsilon} f^3(h_{\varepsilon,x})  h^2_{\varepsilon,xx}   \,dx dt}  \Bigr)^{\frac{1}{2}} \\
& \leqslant \tfrac{C}{(1+K^2)^{3/4}},
\end{align*}
 \begin{align*}
 \Bigl | \iint \limits_{\{ |h_x| \geqslant K\} } { h_{\varepsilon,x}   h_{\varepsilon} f^3(h_{\varepsilon,x}) h_{\varepsilon,xx} \psi  \,dx dt}
\Bigr | & \leqslant  
 \tfrac{C}{(1+K^2)^{1/4}}\Bigl(  \iint \limits_{Q_T} {  h_{\varepsilon} f^3(h_{\varepsilon,x})  h^2_{\varepsilon,xx}   \,dx dt}  \Bigr)^{\frac{1}{2}} \\
 & \leqslant  \tfrac{C}{(1+K^2)^{1/4}}.
 \end{align*}
As a result, since $K > 0$ is arbitrary, we take $\varepsilon \to 0$ and deduce that
$$
\iint \limits_{Q_T} { \kappa_\varepsilon  v_\varepsilon \psi_x  \,dx dt}
\mathop {\to} \limits_{\varepsilon \to 0} \iint \limits_{Q_T} {
( h^{-1}  f(h_{x}) -  \chi_{\{  |h_x| < \infty \}} f^3(h_{x}) h_{xx} ) v \psi_x  \,dx dt},
$$
$$
\iint \limits_{Q_T} { \kappa_\varepsilon  v_{\varepsilon,x}  \psi  \,dx dt} \mathop {\to} \limits_{\varepsilon \to 0}
\iint \limits_{Q_T} {
( 2 \Phi' (h_{x}) -  \chi_{\{  |h_x| < \infty \}} f^3(h_{x})v_x h_{xx} ) \psi   \,dx dt} .
$$
By (\ref{apr-nn-2}), (\ref{bound-2}), and
$$
\|v_{xx} \|_2 \leqslant C \|v_{xxx} \|^{\frac{3}{5}}_2 \|v_{x} \|^{\frac{2}{5}}_1, \  \
\|v_{x} \|_2 \leqslant C \|v_{xxx} \|^{\frac{1}{5}}_2 \|v_{x} \|^{\frac{4}{5}}_1,
$$
we have
\begin{align*}
\varepsilon & \Bigl |  \iint \limits_{Q_T} {   v_{xxx} ( v_{xx} \psi + 2 v_x \psi_x + v \psi_{xx} )   \,dx dt}
\Bigr | \leqslant   \varepsilon \int \limits_{0}^T {   \|v_{xxx} \|_2 \|v_{xx} \|_2 \|\psi \|_{\infty} \,dt}  \\
& + 2\varepsilon \int \limits_{0}^T { \|v_{xxx} \|_2 \|v_{x} \|_2 \|\psi_x \|_{\infty}  \,dt} +
\varepsilon \int \limits_{0}^T {  \|v_{xxx} \|_2 \|\psi_{xx} \|_2 \|v \|_{L^\infty(Q_T)} \,dt} \\
& \leqslant 
C\,\varepsilon^{\frac{1}{5}}  \| \varepsilon^{\frac{1}{2}}  v_{xxx}\|^{\frac{8}{5}}_{L^2(Q_T)}
\| v_x \|^{\frac{2}{5}}_{L^{\infty}(0,T;L^{1}(\Omega))} \| \psi \|_{L^5(0,T;L^{\infty}(\Omega))} \\
& \quad + 
C\,\varepsilon^{\frac{2}{5}}  \| \varepsilon^{\frac{1}{2}}  v_{xxx}\|^{\frac{6}{5}}_{L^2(Q_T)}
\| v_x \|^{\frac{4}{5}}_{L^{\infty}(0,T;L^{1}(\Omega))} \| \psi_x \|_{L^\frac{5}{2}(0,T;L^{\infty}(\Omega))} \\
& \quad + \varepsilon^{\frac{1}{2}}  \| \varepsilon^{\frac{1}{2}}  v_{xxx}\|_{L^2(Q_T)}  \| \psi   \|_{L^2(0,T;H^2(\Omega))}
\leqslant C\, \varepsilon^{\frac{1}{5}} .
\end{align*}
By (\ref{r-rr-apr-3}), $\{ u_{\varepsilon} \}_{\varepsilon > 0}$ is uniformly
bounded in $L^{2}(Q_T)$, and $\{  u_{\varepsilon,x} \}_{\varepsilon > 0}$
is uniformly bounded in $L^{2}(\{ v > \mu\})$ for any $\mu > 0$. Therefore, we have
\begin{equation}\label{boun-1-01}
u_{ \varepsilon}  \mathop {\to} \limits_{\varepsilon \to 0} u \text{ weakly in } L^2(Q_T) ,
\end{equation}
\begin{equation}\label{rtt-4}
u_{ \varepsilon,x}  \mathop {\to} \limits_{\varepsilon \to 0} u_x \text{ weakly in } L^2(\{ v > \mu\}),
\end{equation}
\begin{equation}\label{rtt-5}
u_{ \varepsilon }  \mathop {\to} \limits_{\varepsilon \to 0} u  \text{ strongly in } L^2(\{ v > \mu\})
\text{ and a.\,e. in } \{ v > \mu\} .
\end{equation}
By (\ref{boun-1-00}) and (\ref{rtt-4}), we obtain
$$
\iint \limits_{\{ v > \mu\} } { v_{\varepsilon }u_{\varepsilon,x}  \psi_x  \,dx dt} \mathop {\to} \limits_{\varepsilon \to 0}
\iint \limits_{ \{ v > \mu\} } { v u_{x} \psi_x   \,dx dt} .
$$
On the other hand, if $\varepsilon$ is sufficiently small (depending on $\mu$), then by (\ref{r-rr-apr-3}), we arrive at
$$
\Bigl | \iint \limits_{\{ v \geqslant \mu \} } {  v_{\varepsilon }u_{\varepsilon,x}  \psi_x  \,dx dt}
\Bigr | \leqslant
C\, \mu^{\frac{1}{2}} \Bigl(  \iint \limits_{Q_T} {  v_{\varepsilon }u^2_{\varepsilon,x}  \,dx dt}  \Bigr)^{\frac{1}{2}} \leqslant C\, \mu^{\frac{1}{2}} .
$$
From here, since $\mu > 0$ is arbitrary, we take $\varepsilon \to 0$ and deduce that
$$
\iint \limits_{Q_T} {v_{\varepsilon }u_{\varepsilon,x}  \psi_x \,dx dt}
\mathop {\to} \limits_{\varepsilon \to 0} \iint \limits_{Q_T} {
  \chi_{\{  v > 0 \}} v \, u_{x}  \psi_x  \,dx dt}.
$$
Similarly, using (\ref{boun-1-00}), (\ref{rtt-5})  and (\ref{r-rr-apr-3}), we arrive at
$$
\iint \limits_{Q_T} { v_{\varepsilon }u^2_\varepsilon  \psi_x  \,dx dt} \mathop {\to} \limits_{\varepsilon \to 0}
\iint \limits_{Q_T} {\chi_{\{  v > 0 \}} v u^2 \psi_x   \,dx dt} .
$$
By (\ref{boun-1-00}) and (\ref{boun-1-01}), we get
$$
u_{\varepsilon} v_{\varepsilon}  \mathop {\to} \limits_{\varepsilon \to 0}   u v \text{ weakly in } L^2(Q_T),
$$
and therefore we have
$$
\iint \limits_{Q_T} { u_{\varepsilon} v_{\varepsilon}  \psi_x  \,dx dt} \mathop {\to} \limits_{\varepsilon \to 0}
\iint \limits_{Q_T} { u v   \psi_x   \,dx dt} .
$$
Using the obtained convergence results, we pass to the limit as $\varepsilon \to 0$ in (\ref{ident-1}) and (\ref{ident-2}),
with $\delta = \eta = 0$. As a result, we obtain a weak solution $(h,u)$ in the sense of Definition~\ref{Def-weak}.

\section{Travelling wave solutions}\label{sec:travelling}

Next, we focus on the travelling wave solutions to the control-volume model.
Specifically, we look for a solution to \eqref{r-1-0}-- \eqref{r-2-0} in the form:
$$
u(x,t) = U(\xi), \ \ \ v(x,t) = V(\xi) = H^2(\xi)-1, \text{ where } \xi = x - s\,t,
$$
where $s$ is the propagation speed.
Substituting the ansatz into \eqref{r-1-0}-- \eqref{r-2-0}, we obtain the system of travelling wave ODEs for $(U(\xi), V(\xi))$ for $0\leqslant \xi \leqslant L$,
\begin{equation}\label{t-1}
- s\,U' + a U \,U'  + b\, \kappa' = c\,\frac{(V \,U')'}{V} + 1 - \frac{U}{g(H)} ,
\end{equation}
\begin{equation}\label{t-2}
-s\,V' + a( U \, V )'   = 0
\end{equation}
subject to the $L$-periodic boundary conditions. We also impose the following mass constraint
$$
\int \limits_0^L {H^2(\xi)\, d\xi} = \mathcal{M} > 0,
$$
where $\mathcal{M}$ is related to the mass $M$ defined in \eqref{mass} by $\mathcal{M} = M + L$.
We will look for $L$ (depending on $M$) such that $V(\xi) > 0$ for $\xi \in (0,L)$ and $U$ are continuous
functions. 

To study the structure of travelling wave solutions, first we consider a general travelling wave solution that satisfies the periodic boundary condition
\begin{equation}\label{t-3-0}
V(0) = V(L) > 0 .
\end{equation}
From (\ref{t-2}) it follows that
\begin{equation}\label{tt}
- V (s -a\,U) = C_0,
\end{equation}
which implies that
\begin{equation}\label{t-4-0}
U = U_c + \frac{C_0}{a\,V}  \ \ \forall\,C_0 \in \mathbb{R}^1,  \text{ where } U_c  := \frac{s}{a}.
\end{equation}

\begin{lemma}\label{lem-tw-2}
There exist $L > 0$, $s $ and $C_0 \neq 0$ such that the problem (\ref{t-1})--(\ref{t-2})  has at least one periodic
(non-trivial) solution $(H,U)$ satisfying
$$
H(0) = H(L) > 1, \ H'(0) = H'(L) = 0.
$$
\end{lemma}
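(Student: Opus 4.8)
The plan is to reduce \eqref{t-1}--\eqref{t-2} with the periodic condition \eqref{t-3-0} to a single scalar integro-differential problem for $H$ and then solve it by a fixed-point argument, using $s$ (equivalently $U_c=s/a$), $C_0$, the boundary value $\kappa(0)$, and the mean radius $H(0)$ as the free quantities that get pinned down by natural solvability conditions. First I would substitute the explicit profile \eqref{t-4-0}, $U=U_c+\tfrac{C_0}{aV}$ with $V=H^2-1$, into \eqref{t-1}. Using \eqref{tt} one computes $-sU'+aUU'=U'(aU-s)=\tfrac{C_0^2}{2a}\bigl(\tfrac1{V^2}\bigr)'$ and $VU'=-\tfrac{C_0}{a}\tfrac{V'}{V}$, so that the viscous term $c\tfrac{(VU')'}{V}$ splits into an exact part $-\tfrac{cC_0}{a}\bigl(\tfrac{V'}{V^2}\bigr)'$ and a genuinely non-exact remainder $-\tfrac{cC_0}{a}\tfrac{(V')^2}{V^3}$. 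Integrating \eqref{t-1} once in $\xi$ then yields a relation $b\,\kappa(\xi)=G[\xi;H,U_c,C_0,\lambda]$, where $\lambda:=b\,\kappa(0)$ encodes $H''(0)$ once $H'(0)=0$ is imposed, and $G$ is an explicit expression built from $H$, $H'$, boundary terms, and an $\xi$-integral of $1-\tfrac{U_c}{g(H)}-\tfrac{C_0}{aVg(H)}-\tfrac{cC_0}{a}\tfrac{(V')^2}{V^3}$.

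Next I would use the identity $HH'\kappa=[Hf(H')]'$, which follows from $f'(z)=-zf^3(z)$ exactly as in the derivation of \eqref{t-6}. Multiplying $b\kappa=G$ by $HH'$ and integrating again, with $H'(0)=0$ so that $f(H'(0))=1$, gives
$$H(\xi)\,f\bigl(H'(\xi)\bigr)=H(0)+\tfrac1b\int_0^\xi HH'\,G\,d\xi'.$$
Since $f$ maps monotonically onto $(0,1]$, this determines $H'(\xi)$ as a functional of $H|_{[0,\xi]}$ and of the parameters, provided the right-hand side stays in $(0,H(\xi)]$, and hence recasts the whole problem as a fixed-point equation for $H$. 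The requirements $H(L)=H(0)$, $H'(L)=0$, periodicity of $\kappa$ (that is $\kappa(L)=\lambda/b$, so $H''$ is periodic) and the mass constraint $\int_0^L H^2\,d\xi=\mathcal M$ become four scalar solvability relations for the four unknowns $(H(0),\lambda,U_c,C_0)$; setting $C_0=0$ recovers $U_c=\bigl(\tfrac1L\int_0^L\tfrac{d\xi}{g(H)}\bigr)^{-1}$ and the first integral \eqref{t-7} from Lemma~\ref{lem-tw}, so the construction degenerates consistently to that lemma.

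I would then invoke Schauder's fixed-point theorem (or Leray--Schauder degree) after appending the four scalar parameters $(H(0),\lambda,U_c,C_0)$ to the unknown, on a closed bounded convex set of $L$-periodic $C^1$ functions of the form $\{1<m\le H\le M,\ |H'|\le\rho,\ H'(0)=0\}$ carrying the prescribed mass, with the parameters ranging in a compact box: the map solves the integro-differential relation above for a new $H$ and adjusts the parameters so as to drive the four solvability residuals to zero. The underlying integral operators are compact and continuous, so it remains to select $m,M,\rho$ and the parameter box for which this set is invariant; the translation normalization $H'(0)=0$ removes the translation invariance and costs nothing.

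The main obstacle I anticipate is maintaining the a priori bounds $H\ge m>1$ and $\bigl|H(0)+\tfrac1b\int_0^\xi HH'G\bigr|<H(\xi)$ uniformly, that is, keeping $V=H^2-1$ bounded away from zero so that the singular quantities $\tfrac{C_0}{aV}$ in $U$ and $\tfrac{(V')^2}{V^3}$ in $G$ stay controlled and $f(H')$ never reaches its maximal value $1$ (which would mean losing the graph representation). This probably forces restricting to the non-degenerate regime $\min H>1$ with $|C_0|$ not too large, or deriving a travelling-wave analogue of the energy/entropy bounds of Section~\ref{sec:existence} to preclude touchdown; a secondary difficulty is the nested determination of the four scalar parameters alongside the infinite-dimensional fixed point, which is cleanest if one runs a single degree argument on the product space and checks that the a priori bounds are uniform along the homotopy.
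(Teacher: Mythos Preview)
Your reduction is essentially the paper's: substitute \eqref{t-4-0} into \eqref{t-1}, obtain
\[
\Bigl[b\,\kappa+\tfrac{C_0^2}{2a}V^{-2}\Bigr]'=-\tfrac{cC_0}{a}\,\tfrac1V\Bigl(\tfrac{V'}{V}\Bigr)'+1-\tfrac{U_c}{g(H)}-\tfrac{C_0}{aVg(H)},
\]
integrate once, and then use $-\kappa HH'=[Hf(H')]'$. Where the paper differs is only in how the second integration is packaged. After multiplying by $HH'$ the paper integrates by parts in your term $\int_0^\xi HH'\,G\,d\xi'=\tfrac12 H^2G-\tfrac12\int_0^\xi H^2G'\,d\xi'$ and absorbs $\tfrac{C_0^2}{4ab}V^{-1}$, arriving at the explicit first integral
\[
f(H')=A(\xi)H+B(\xi)H^{-1}-\tfrac{C_0^2}{4ab}H^{-1}V^{-1},\qquad A=-\tfrac12 G,\quad B(\xi)=B(0)+\tfrac12\!\int_0^\xi G'H^2.
\]
This buys a cleaner parameter count: the two periodicity conditions $G(L)=0$ and $B(L)=B(0)$ become a $2\times2$ linear system that the paper solves \emph{explicitly} for $U_c$ and $C_0/a$ (in terms of integrals of $H$), while the remaining boundary relation at $\xi=0$ becomes the cubic $(H(0)^2-1)(H(0)-B(0))=-\tfrac{C_0^2}{4ab}$ in $H(0)$ for given $B(0)$. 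Your four scalars $(H(0),\lambda,U_c,C_0)$ correspond to the paper's $(H(0),B(0),U_c,C_0)$, but in the paper two of them are eliminated algebraically rather than carried into a degree argument.

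Note, however, that the paper's proof stops exactly at this formal reduction: it derives the first integral, the explicit formulas for $U_c$, $C_0/a$, and the cubic for $H(0)$, and records when that cubic has one or two roots, but it does \emph{not} run any fixed-point or compactness argument to close the loop. Your proposed Schauder/Leray--Schauder step, and the a priori bounds you flag (keeping $H>1$ uniformly so that $V^{-1}$, $V^{-3}(V')^2$ and the constraint $|f(H')|\le1$ stay controlled), go strictly beyond what the paper actually establishes. So your plan is consistent with the paper's derivation; the difficulties you isolate are real and are simply not addressed there.
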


\begin{proof}[Proof of Lemma~\ref{lem-tw-2}]
From (\ref{t-1}) it follows that
$$
\left[b\, \kappa + \tfrac{a}{2}(U - U_c)^2  \right]' = c \tfrac{(V U' )'}{V} +  1 - \tfrac{U}{g(H)},
$$
whence, due to (\ref{t-4-0}), we obtain
\begin{equation}\label{tt-00}
\left[b\, \kappa + \tfrac{C_0^2}{2a}V^{-2}   \right]' = - \tfrac{c\,C_0}{a} \tfrac{1}{V} \left( \tfrac{V'}{V}\right)'   +  1 - \tfrac{U_c}{g(H)}
- \tfrac{C_0}{a \,Vg(H)}.
\end{equation}
Let $C_0 \neq 0$. Then we have
\begin{equation}\label{t-7-b}
\left[H f(H') - \tfrac{C_0^2}{4ab}V^{-1} \right]' = (C_1 - G(\xi)) H H' \ \ \forall\, C_1 \in \mathbb{R}^1,
\end{equation}
where
$$
G(\xi) :=  b^{-1} \int \limits_{\xi_0}^{\xi} { \Bigl(  \tfrac{U_c}{g(H(y))} + \tfrac{C_0}{a \,V(y)g(H(y))} - 1  + \tfrac{c\,C_0}{a} \tfrac{1}{V} \left( \tfrac{V'}{V}\right)' \Bigr) \, dy  }.
$$
By imposing the periodicity $G(0) = G(L) $, we obtain
\begin{equation}\label{rrm-1}
U_c \int \limits_0^{L} { \tfrac{dy}{g(H(y))}}  + \tfrac{C_0}{a} \int \limits_0^{L} { \bigl( \tfrac{1}{V(y)g(H(y))} + c\, \tfrac{V'^2(y)}{V^{3}(y)} \bigr)dy } = L.
\end{equation}
Integrating (\ref{t-7-b}), we deduce that
\begin{equation}\label{t-8}
f(H') = A(\xi) H + B(\xi) H^{-1} + \tfrac{C_0^2}{4ab}H^{-1}V^{-1},
\end{equation}
where
$$
A(\xi) = C_1 - \tfrac{1}{2}G(\xi), \ \  B(\xi) = C_2  + \tfrac{1}{2} \int \limits_{\xi_0}^{\xi} {G'(y) H^2(y) \, dy }
$$
and $C_i$ for $i=1,2$ are arbitrary constants  such that $A(0) = A(L)$ and $B(0) = B(L)$. From $B(0) = B(L)$, it follows that
\begin{equation}\label{rrm-2}
U_c \int \limits_0^{L} { \tfrac{H^2(y) dy}{g(H(y))}}  + \tfrac{C_0}{a} \int \limits_0^{L} { \bigl( \tfrac{H^2(y)}{V(y)g(H(y))} +
c\, \tfrac{V'^2(y)}{V^{3}(y)}\bigr) dy } = \mathcal{M}.
\end{equation}
Therefore, solving system (\ref{rrm-1}), (\ref{rrm-2}) for $U_c$ and $C_0$, we find that
$$
U_c = \frac{1}{\newDelta} \Bigl( L \int \limits_0^{L} {\tfrac{H^2(y)dy}{V(y)g(H(y))}} -
\mathcal{M} \int \limits_0^{L} {  \tfrac{dy}{V(y)g(H(y))}}   + c(L-\mathcal{M}) \int \limits_0^{L} {
\tfrac{V'^2(y)}{V^{3}(y)}\, dy}\Bigr),
$$
$$
C_0 = -{\frac{a}{{\newDelta}}} \Bigl( L \int \limits_0^{L} { \tfrac{H^2(y) dy}{g(H(y))}} - \mathcal{M} \int \limits_0^{L} { \tfrac{dy}{g(H(y))}} \Bigr),
$$
where
\begin{align*}
\newDelta = & \Bigl(   \int \limits_0^{L} { \tfrac{ dy}{g(H(y))}}  \Bigr) \Bigl(   \int \limits_0^{L} { \tfrac{H^2(y)dy}{V(y)g(H(y))}}  \Bigr)
- \Bigl(   \int \limits_0^{L} { \tfrac{H^2(y) dy}{g(H(y))}}  \Bigr)\Bigl(   \int \limits_0^{L} {   \tfrac{dy}{V(y)g(H(y))}}  \Bigr) \\
&- c \Bigl(   \int \limits_0^{L} { \tfrac{V(y)dy}{ g(H(y))} }  \Bigr)  \Bigl(   \int \limits_0^{L} {
\tfrac{V'^2(y)}{V^{3}(y)}\, dy}  \Bigr) .
\end{align*}
As a result, by (\ref{t-8}) we arrive at
$$
H'^2(\xi) = \frac{1 - \left[A(\xi) H + B(\xi) H^{-1} + \frac{C_0^2}{4ab}H^{-1}V^{-1}\right]^2 }{ \left[A(\xi) H + B(\xi) H^{-1} + \frac{C_0^2}{4ab}H^{-1}V^{-1}\right]^2}.
$$
Furthermore, if we select $C_i$ for $i=1,2$ such that $A(0) = B(0) = 0$, then we have 
$H'(0) = H'(L) = 0$ provided that $H(0) = H(L) > 1$ satisfy the following
equation
$$
H(0) (H^2(0) -1)  =   \tfrac{C_0^2}{4ab},
$$
which has one solution if $C_0 \neq 0$.

%
\end{proof}

Next, we consider a special case when the film profile
touches down to zero at the boundary. That is, we assume that
\begin{equation}\label{t-3}
V(0) = V(L) = 0, \quad \text{or, equivalently, } H(0) = H(L) = 1.
\end{equation}
From \eqref{tt} and (\ref{t-3}), we obtain $C_0 =0$, and $U(\xi)$ becomes a trivial solution
\begin{equation}\label{t-4}
U \equiv U_c := \frac{s}{a}.
\end{equation}

\begin{lemma}\label{lem-tw}
There exist $L > 0$ and $s$ such that the problem (\ref{t-1})--(\ref{t-2})   has at least one periodic solution $(H,U)$ such that
$$
H(0) = H(L) = 1, \ H'(0) = H'(L) ,
$$
where the average fluid film radius
\begin{equation}\label{asss}
\bar{\mathcal{M}} := \frac{\mathcal{M}}{L} =  1 + \tfrac{\int \limits_0^{L} { \tfrac{H^2(y)-1}{g(H(y))}   dy } }{ \int \limits_0^{L} { \tfrac{dy}{g(H(y))}} }.
\end{equation}
\end{lemma}

\begin{remark}
If $g(H) = H^2 - 1$ (the plug flow case), then (\ref{asss}) implies that
$$
\bar{\mathcal{M}} = 1+ \frac{s}{a}   .
$$
\end{remark}

\begin{proof}[Proof of Lemma~\ref{lem-tw}]
Since $U$ is a trivial solution satisfying (\ref{t-4}), the ODE (\ref{t-1}) reduces to
\begin{equation}
b\, \kappa' =  1 - \frac{U_c}{g(H)}.
\nonumber
\end{equation}
Using the relation
\begin{equation}
\kappa =  f(H') H^{-1} - f^3(H')H'' = \frac{( H f(H'))'}{H H'},
\nonumber
\end{equation}
we obtain
\begin{equation}
\label{tr-00}
 b \left[ \frac{( H f(H'))'}{H H'} \right]' = 1 - \frac{U_c}{g(H)}.
\end{equation}
Integrating \eqref{tr-00} once, we have
\begin{equation}\label{t-5}
\frac{( H f(H'))'}{H H'}  = F(\xi) + C_1,
\end{equation}
where
$$
F(\xi)  := b^{-1} \int \limits_{\xi_0}^{\xi} { \bigl(1 - \tfrac{U_c}{g(H(y))}   \bigr) dy }.
$$
By periodicity, we find that $F(0) = F(L)$ which implies
$$
U_c =  \Bigl( \frac{1}{L}\int \limits_0^{L} { \frac{dy}{g(H(y))}   } \Bigr)^{-1}.
$$
From (\ref{t-5}), we deduce that
\begin{equation}\label{t-7}
f(H') = A(\xi) H + B(\xi)  H^{-1},
\end{equation}
where 
$$
A(\xi) = C_1 + \tfrac{1}{2}F(\xi), \  B(\xi) = C_2 -  \tfrac{1}{2} \int \limits_{\xi_0}^{\xi} { H^2(y) F'(y) \, dy },
$$
and $C_i$ for $i=1,2$ are arbitrary constants. We note that a necessary condition for the existence of real-valued solutions to the ODE \eqref{t-7} is $0 \leqslant A(\xi) H + B(\xi)  H^{-1}\leqslant 1$.
By periodicity, we find that
$$
\int \limits_{0}^{L} { \bigl(1 - \tfrac{U_c}{g(H(y))}   \bigr) dy } =
\int \limits_{0}^{L} { H^2(y)  \bigl(1 - \tfrac{U_c}{g(H(y))}  \bigr) \, dy },
$$
whence we get
$$
\mathcal{M} = L + U_c \int \limits_0^{L} { \tfrac{H^2(y)-1}{g(H(y))}   dy } = L + \Bigl( \tfrac{1}{L}\int \limits_0^{L} { \tfrac{dy}{g(H(y))}  } \Bigr)^{-1}
\int \limits_0^{L} { \tfrac{H^2(y)-1}{g(H(y))}   dy }  .
$$
Hence, $ \mathcal{\bar{M}} = \mathcal{M}$ satisfies
$$
 \mathcal{\bar{M}}  =  1 + \tfrac{\int \limits_0^{L} { \tfrac{H^2(y)-1}{g(H(y))}   dy } }{ \int \limits_0^{L} { \tfrac{dy}{g(H(y))}} }.
$$
Moreover, by (\ref{t-7}) we arrive at
$$
[H']^2 = \frac{1 - [A(\xi) H + B(\xi) H^{-1}]^2 }{ [A(\xi) H + B(\xi) H^{-1}]^2}.
$$
Furthermore, if we select $C_i$ for $i=1,2$ such that $A(0) = B(0) = 0$, then we have $H'(0) = H'(L) = \infty$.

\end{proof}

In section \ref{sec:numerics}, we present numerical studies of travelling wave solutions discussed in Lemma \ref{lem-tw-2} that do not touch down to zero. Since we do not observe any PDE solution to the system \eqref{r-1} -- \eqref{r-4}  that leads to a touch-down singularity in finite time, we leave the discussion of the traveling wave solution that touches down to zero, as considered in Lemma \ref{lem-tw}, for future work.

\section{Numerical studies}
\label{sec:numerics}
In this section, we numerically investigate the coupled PDE system \eqref{r-1} -- \eqref{r-4} to explore the fibre coating dynamics and verify the analytical results in previous sections. Following the work of Ruan et al. \cite{ruan2021liquid}, we specify the form of the function $g(h)$ based on two models - the plug flow model and the laminar flow model. For the plug flow model, we set $g(h)$ based on the form in \eqref{plug_model}. For the laminar flow model, the function $g(h)$ takes the form in equation \eqref{laminar_model}.

Firstly, we numerically investigate the travelling wave solutions $(H(\xi), U(\xi))$ that satisfy the coupled ODE system \eqref{t-1} - \eqref{t-2} with the mass constraint \eqref{mass}, $\int_0^L V(\xi)~d\xi = \int_0^L (H^2(\xi)-1)~d\xi = M$.
We apply Newton's method to solve this nonlinear eigenvalue problem, where the speed $s$ is treated as an unknown variable. The coupled differential equations are discretized for $0 \le \xi\le L$ with periodic boundary conditions on $H$ and $U$ by second-order center finite differences. An additional constraint $H(\xi^*) = H^*$ for some $0\le \xi^* \le L$ is imposed to guarantee the local uniqueness of the solution.

Figure \ref{fig:tws} presents typical travelling wave solutions $(H(\xi), U(\xi))$ corresponding to two cases for the plug flow model and two cases for the laminar flow model:
\begin{enumerate}
    \item[(a)] Plug flow: $a=0.2, b=10, c=1$ with travelling speed $s = 1.396$
    \item[(b)] Plug flow: $a=0.4, b=12, c=3$ with travelling speed  $s=2.517$;
    \item[(c)] laminar flow $a=1.5, b=13, c=4$ with travelling speed  $s=1.482$;
    \item[(d)] laminar flow $a=0.1, b=11, c=4$ with travelling speed $s=0.1$.
\end{enumerate}
The choices of the parameters $a$, $b$, and $c$ for these cases correspond to the typical traveling waves simulated and discussed in \cite{ruan2021liquid}. These parameter values result in typical traveling wave solutions for both plug flow and laminar flow models of varying magnitudes.
A fixed domain size $L = 20$ and mass constraint $M = 84.8$ are set for all cases.
The profiles are shifted so that the maximum of the droplet peaks are located at $\xi = L/2$. This comparison shows that in a fixed domain with equal volumes, the travelling waves for the plug flow model have more prominent peaks and higher velocity  magnitude than those obtained from the laminar flow model.

\begin{figure}
    \centering
    \includegraphics[width=5cm]{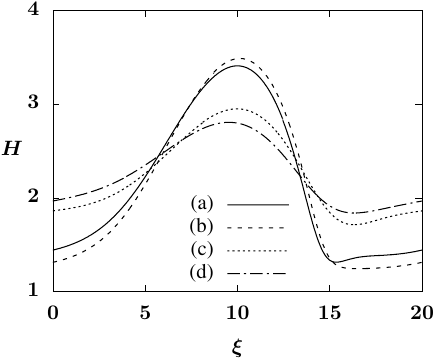}
    \includegraphics[width=5cm]{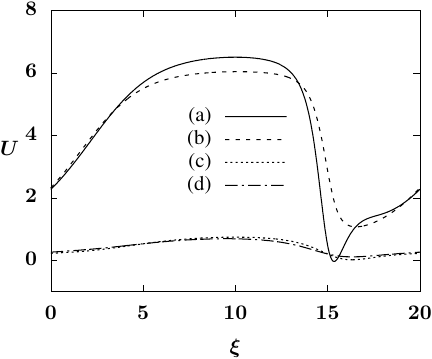}
    \caption{Typical travelling wave profiles (left) $H(\xi)$ and (right) $U(\xi)$ for two plug flow cases ((a) and (b)) and two laminar flow cases ((c) and (d)).
    }
    \label{fig:tws}
\end{figure}

Next, we study the transient PDE solutions of the governing model \eqref{r-1} -- \eqref{r-4} and verify the derived energy and entropy estimates in previous sections.
To numerically solve the coupled fourth-order PDEs, we use the Keller box method \cite{keller1971new} to decompose the model into a system of first-order differential equations,
\begin{equation}
\begin{split}
    & k = h_x,\quad p =k_x,\quad w = u_x, \\
    & u_t + a\left(\frac{u^2}{2}\right)_x+b\left[f(k)h^{-1}-f^3(k)p\right]_x = c\frac{[(h^2-1)w]_x}{h^2-1} + 1 - \frac{u}{g(h)},\\
    &   2hh_t + a[u(h^2-1)]_x = 0.
  \end{split}
\label{eqn:numeric_main}
\end{equation}
\begin{figure}
    \centering
    \includegraphics[width=5cm]{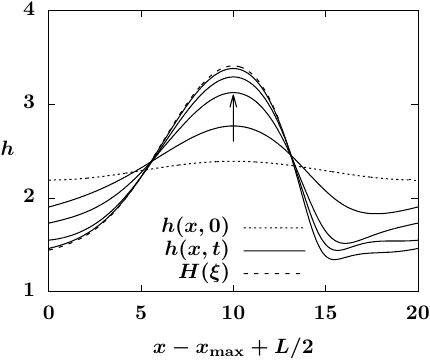}
    \includegraphics[width=5cm]{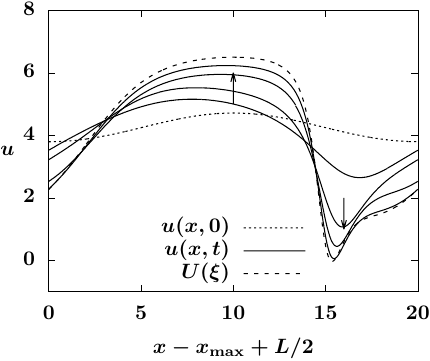}
    \includegraphics[width=5cm]{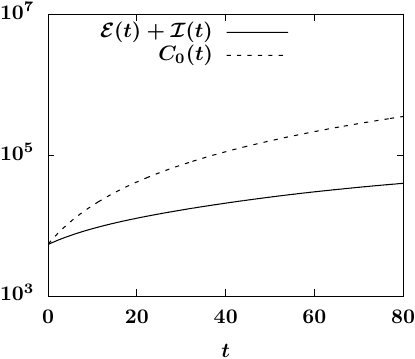}
    \includegraphics[width=5cm]{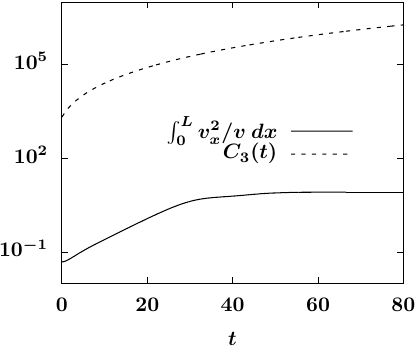}
    \caption{Dynamics of plug flow with (top left) $h(x,t)$ and (top right) $u(x,t)$ starting from initial profiles \eqref{eq:ic} with $h_0 = 2.29$, showing that the PDE solution approaches a travelling wave solution $(H(\xi), U(\xi))$ satisfying equations \eqref{t-1} -- \eqref{t-2} with the velocity $s=1.396$. The solutions are shifted so that the maximums are aligned. The corresponding energy (bottom left) satisfies the estimate \eqref{e-3}, $\mathcal{E}(t) + \mathcal{I}(t) < C_0(t)$, where $\mathcal{I}(t) = c \iint \limits_{Q_t} {  v u^2_x \,dx dt} + \iint \limits_{Q_t} { \tfrac{u^2 v }{g(h)} \,dx dt}$. The entropy (bottom right) satisfies the estimate \eqref{n-7}, $\int_0^L v_x^2/v~dx < C_3(t)$. The system parameters are $L = 20$, $a=0.2$, $b=10$, $c = 1$ with $g(h) = h^2-1$. }
    \label{fig:PDE_plugFlow_hu_convergence}
\end{figure}
Starting from the initial fluid film radius and the initial velocity
\begin{equation}
    h(x,0) = h_0 + 0.1 \sin(2\pi x / L),\qquad u(x,0) = g(h(x,0)),
\label{eq:ic}
\end{equation}
we solve the system \eqref{eqn:numeric_main} using fully implicit second-order centered finite differences over the domain $0 \leqslant x \leqslant L$, with periodic boundary conditions imposed on both $u$ and $h$. 
Here, we follow the work of Ruan et al. \cite{ruan2021liquid}, using the axial velocity profile $g(h(x,0))$ as the initial velocity profile.
For all PDE simulations, we keep the domain size $L = 20$ and $h_0 = 2.29$, so that the mass $M = 84.8$. The values chosen for $h_0$ and $L$ correspond to those used in a experimental comparison conducted in \cite{ruan2021liquid}. 

The top two plots in Figure \ref{fig:PDE_plugFlow_hu_convergence} show the dynamics of $(h(x,t), u(x,t))$ for the plug flow case, where the PDE solution converges to a travelling wave solution $(H(\xi), U(\xi))$ that satisfies the ODE system \eqref{t-1} -- \eqref{t-2} with the velocity $s=1.396$. The solution profiles are shifted by $x \to x - x_{\max}(t)+L/2$, where $x_{\max}(t)$ is the location of the peaks of the travelling waves in time, so that the peaks are aligned as the wave evolves and travels to the right.
The system parameters are given by $a=0.2$, $b=10$, $c = 1$ with $g(h) = h^2-1$ in \eqref{plug_model}, and the traveling wave corresponds to the case $(a)$ presented in Fig.~\ref{fig:tws}.

\begin{figure}
    \centering
    \includegraphics[width=5cm]{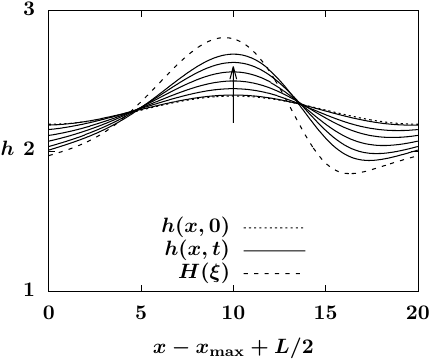}
    \includegraphics[width=5cm]{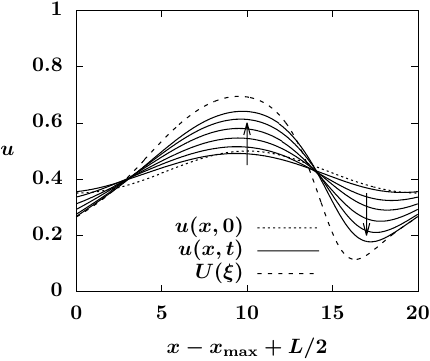}
    \includegraphics[width=5cm]{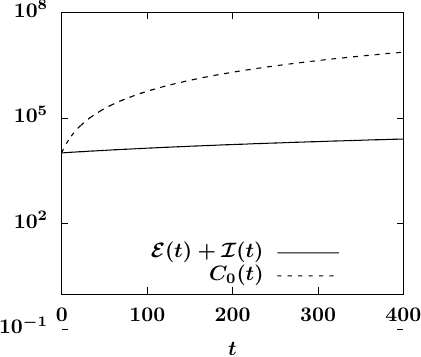}
    \includegraphics[width=5cm]{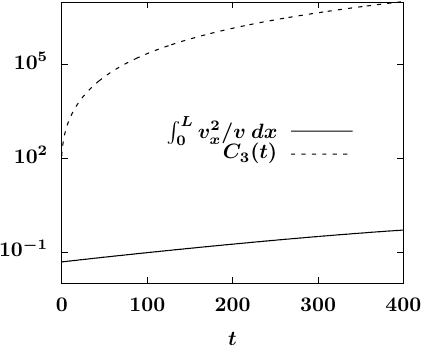}
    \caption{Dynamics of laminar flow (top left) $h(x,t)$ and (top right) $u(x,t)$ starting from initial profiles \eqref{eq:ic} with $h_0 = 2.29$, showing that the PDE solution approaches a travelling wave solution $(H(\xi), U(\xi))$ satisfying equations \eqref{t-1} - \eqref{t-2} with the velocity $s=0.1$. The solutions are shifted so that the maximums are aligned.
    Again, the corresponding energy plot (bottom left) shows that the energy satisfies the estimate \eqref{e-3},  $\mathcal{E}(t) + \mathcal{I}(t) < C_0(t)$, where $\mathcal{I}(t) = c \iint \limits_{Q_t} {  v u^2_x \,dx dt} + \iint \limits_{Q_t} { \tfrac{u^2 v }{g(h)} \,dx dt}$. The entropy (bottom right) satisfies the estimate \eqref{n-7}, $\int_0^L v_x^2/v~dx < C_3(t)$.  The system parameters are $L = 20$, $a=0.1$, $b=11$, $c = 4$, $g(h) = I(h)/(h^2-1)$. }
    \label{fig:PDE_laminar_hu_convergence}
\end{figure}
We also numerically verify the analytically derived energy and entropy estimates. In Figure \ref{fig:PDE_plugFlow_hu_convergence}~(bottom left), we show that the energy estimate \eqref{e-3}
$$\mathcal{E}(t) + \mathcal{I}(t) < C_0(t)$$ is satisfied as the transient PDE solution approaches the travelling wave profile in time,
where $\mathcal{I}(t) = c \iint \limits_{Q_t} {  v u^2_x \,dx dt} + \iint \limits_{Q_t} { \tfrac{u^2 v }{g(h)} \,dx dt}$.  Fig.~\ref{fig:PDE_plugFlow_hu_convergence} (bottom right) presents the numerically approximated integral $\int \limits_0^L { \tfrac{v_x^2}{v}\, dx }$ and the upper bound $C_3(t)$ defined in  \eqref{eq:C3} for the dynamic PDE solution, indicating that the entropy estimate \eqref{n-7},
$$
\int \limits_0^L { \tfrac{v_x^2}{v}\, dx } < C_3(t)
$$
is also satisfied in time. Here, we set $\epsilon=0.5$ in the definition of $C_3(t)$ in \eqref{eq:C3}, and this estimate holds for any $\epsilon\in (0,1)$.

Figure \ref{fig:PDE_laminar_hu_convergence} shows a similar numerical study for the laminar flow case with the system parameters $a = 0.1$, $b = 11$, and $c = 4$, and the function $g(h)$ is given by \eqref{laminar_model}. In this case, the laminar flow fluid radius $h(x,t)$ and velocity $u(x,t)$ starting from identical initial data used in Fig.~\ref{fig:PDE_plugFlow_hu_convergence} converge to a slowly-moving travelling wave parametrized by $(H(\xi), U(\xi))$ with the speed of propagation $s = 0.1$ (see Fig.~\ref{fig:PDE_laminar_hu_convergence} (top panel)). The obtained travelling wave corresponds to the case $(d)$ presented in Fig.~\ref{fig:tws}. Similar to the plug-flow case shown in Fig.~\ref{fig:PDE_plugFlow_hu_convergence}, the laminar flow solution also satisfies the energy and entropy estimates, as demonstrated in Fig.~\ref{fig:PDE_laminar_hu_convergence} (bottom panel).

\section{Conclusions}
\label{sec:conclusion}
The main contribution of this paper is the proof of the existence of weak solutions to the coupled PDE system \eqref{r-1}--\eqref{r-2} for the control-volume fibre coating model. This result establishes the analytical foundation for the control-volume model in real-world applications. The a priori energy-entropy functional estimates used in the proof also provide a possible pathway for showing the regularity of solutions in similar coupled PDE systems in other fibre coating models \cite{ruyer2008modelling, Ji2020Modeling}.
In contrast to the work of Bresch et al.~\cite{bresch2015two} and Kitavtsev et al.~\cite{kitavtsev2011weak}, for the proof of existence, we use another approximation of the continuity equation by the family of thin film equations (see \eqref{aprr-2}).  This new idea can be applied for the analysis of other systems with the same structure.
Typical numerical simulations of the PDE model are presented to support the analytical results, with a focus on the travelling wave solutions. For future studies, it would be interesting to further investigate the convergence criteria of PDE solutions to travelling wave solutions and other coherent structures.

\bibliographystyle{abbrv}
\bibliography{fiberSystem.bib}

\end{document}